\DeclareMathAlphabet{\mathcal}{OMS}{cmsy}{m}{n}
\newtheorem{thm}{Theorem}[section]
\newtheorem{cor}[thm]{Corollary}
\newtheorem{lem}[thm]{Lemma}
\newtheorem{assump}[thm]{Assumption}
\newtheorem{rem}[thm]{Remark}
\newcommand{\bt}{\begin{thm}}
\newcommand{\et}{\end{thm}}
\newcommand{\bl}{\begin{lem}}
\newcommand{\el}{\end{lem}}
\newcommand{\bc}{\begin{cor}}
\newcommand{\ec}{\end{cor}}
\newcommand{\br}{\begin{rem}}
\newcommand{\er}{\end{rem}}
\newcommand{\ba}{\begin{array}}
\newcommand{\ea}{\end{array}}
\newcommand{\bea}{\begin{eqnarray}}
\newcommand{\eea}{\end{eqnarray}}
\newcommand{\bead}{\begin{eqnarray*}}
\newcommand{\eead}{\end{eqnarray*}}
\newcommand{\be}{\begin{equation}}
\newcommand{\ee}{\end{equation}}
\newcommand{\bed}{\begin{displaymath}}
\newcommand{\eed}{\end{displaymath}}
\newcommand{\bps}{\begin{split}}
\newcommand{\eps}{\end{split}}
\newcommand{\la}{\label}
\newcommand\dR{{\mathbb{R}}}
\newcommand\dC{{\mathbb{C}}}
\newcommand\dN{{\mathbb{N}}}
\newcommand\gotH{{\mathfrak{H}}}
\newcommand\gotK{{\mathfrak{K}}}
\newcommand{\ga}{{\alpha}}
\newcommand{\gb}{{\beta}}
\newcommand{\gd}{{\delta}}
\newcommand{\gD}{{\Delta}}
\newcommand{\gga}{{\gamma}}
\newcommand{\gL}{{\Lambda}}
\newcommand{\gs}{{\sigma}}
\newcommand\gt{{\tau}}
\newcommand{\gY}{\Upsilon}
\newcommand\cA{{\mathcal{A}}}
\newcommand\cB{{\mathcal{B}}}
\newcommand\cD{{\mathcal{D}}}
\newcommand\cI{{\mathcal{I}}}
\newcommand\cK{{\mathcal{K}}}
\newcommand\cL{{\mathcal{L}}}
\newcommand\cT{{\mathcal{T}}}
\newcommand\cU{{\mathcal{U}}}
\newcommand{\dom}{\mathrm{dom}}
\newcommand{\cl}{\mathcal}
\DeclareMathOperator*{\esssup}{ess\,sup}
\def\1{{\bf 1}}
\def\wt#1{{{\widetilde #1} }}
\newcommand{\slim}{\,\mbox{\rm s-}\hspace{-2pt} \lim}
\newcommand{\eproof}{\hfill$\Box$}
\begin{document}

\title*{Trotter product formula and linear evolution equations on Hilbert spaces\\
\vspace{5mm}
\normalsize \it On the occasion of the 100th birthday of Tosio Kato}
\titlerunning{Trotter product formula and evolution semigroups}

\author{Hagen Neidhardt, Artur Stephan, Valentin A. Zagrebnov}
\authorrunning{H.~Neidhardt, A.~Stephan, V.~A.~Zagrebnov}

\institute{H.~Neidhardt \at Weierstrass Institute for Applied Analysis and Stochastics,\\
Mohrenstr. 39, D-10117 Berlin, Germany\\
\email{hagen.neidhardt@wias-berlin.de}
\and
A.~Stephan \at
Institut f\"ur Mathematik,
Humboldt-Universit\"at zu Berlin,\\
Unter den Linden 6,
D-10099 Berlin, Germany\\
\email{stephan@math.hu-berlin.de}
\and
V.~A.~Zagrebnov \at
Institut de Math\'{e}matiques de Marseille  (UMR 7373)\\
Universit\'{e} d'Aix-Marseille,
CMI - Technop\^{o}le Ch\^{a}teau-Gombert\\
39 rue F. Joliot Curie, 13453 Marseille, France\\
\email{valentin.zagrebnov@univ-amu.fr}}

\maketitle

\abstract{The paper is devoted to evolution equations of the form
\bed
\frac{\partial}{\partial t}u(t) = -(A + B(t))u(t), \quad t \in \cI = [0,T],
\eed
on separable Hilbert spaces where $A$ is a non-negative self-adjoint operator and $B(\cdot)$ is family of non-negative self-adjoint operators
such that $\dom(A^\ga) \subseteq \dom(B(t))$ for some $\ga \in [0,1)$ and the map $A^{-\ga}B(\cdot)A^{-\ga}$ is  H\"older continuous
with the H\"older exponent $\gb \in (0,1)$. It is shown that the solution operator $U(t,s)$ of the evolution equation can be approximated in the operator norm
by a combination of semigroups generated by $A$ and $B(t)$  provided the condition $\gb > 2\ga -1$ is satisfied. The convergence rate for the approximation is given by the H\"older exponent $\gb$.
The result is proved using the evolution semigroup approach.
}

\numberwithin{equation}{section}
\renewcommand{\theequation}{\arabic{section}.\arabic{equation}}

\section{Introduction} \label{sec:1}

\noindent
A closer look to Kato's work shows that abstract evolution equations and Trotter product formula were topics of high interest for Kato.
Already at the beginning of his scientific career Kato was interested in evolution equations \cite{Kato1953,Kato1956}. This interest has lasted a lifetime
\cite{Kato1961,Kato1965,Kato1970,Kato1973,Kato2011a,Kato2011b,KatoTan1962}. Another topic of great interest for him was the so-called Trotter product formula
\cite{Kato1978,Kato1978b,Kato1974,Kato1978c}. Even the paper \cite{Kato1978} has inspired further developments in this field \cite{ITTZ2001}.

The topic of the present paper is to link evolution equations with the Trotter product formula. To this end
we consider an abstract evolution equation of type
\be\la{eq:1.1}
\begin{split}
\frac{\partial u(t)}{\partial t} = & -C(t) u(t), \quad u(s) = x_s, \quad s \in [0,T),\\
C(t) = & A +B(t),
\end{split}
\qquad t \in \cI := [0,T],
\ee
on the separable Hilbert space $\gotH$. Evolution equations of that type on Hilbert or Banach spaces are widely investigated, cf. \cite{AcquistapaceTerreni1984,AcquistapaceTerreni1985,
Amann1988,Amann1987,ArendtDierOuhabaz2014,KatoTan1962,Lunardi1987,MonniauxRhandi2000,Tan1959,Tan1960a,Tan1960b,
Tan1961,Tan1967,Tan1967b,Yagi1976,Yagi1977,Yagi1988,Yagi1989,Yagi1990} or the books
\cite{Amann1995,Tan1979,Yagi2010}.
We consider the equation \eqref{eq:1.1} under the following assumptions.

\begin{assump}
{\em
\begin{enumerate}[label=(S\arabic*), leftmargin=3\parindent]
\item[]

\item The operator $A$ is self-adjoint in the Hilbert space $\gotH$ such that $A \ge I$.
Let $\{B(t)\}_{t \in \cI}$ be a family of non-negative self-adjoint operators in $\gotH$ such that
the function $(I + B(\cdot))^{-1}: \cI \longrightarrow \cL(\gotH)$ is strongly measurable.

\item There is an $\ga \in [0,1)$ such that for a.e. $t \in \cI$ the inclusion $\dom(A^\ga) \subseteq \dom(B(t))$
holds. Moreover, the function $B(\cdot)A^{-\ga}: \cI \longrightarrow \cL(\gotH)$
is strongly measurable and essentially bounded, i.e.
\be
C_\ga := \esssup_{t\in \cI}\|B(\cdot)A^{-\ga}\| < \infty.
\ee

\item

The map $A^{-\ga}B(\cdot)A^{-\ga}: \cI \longrightarrow \cL(\gotH)$ is H\"older continuous, i.e, for some $\gb \in (0,1)$ there is a constant $L_{\ga,\gb} > 0$  such that
the estimate
\be\la{eq:2.2}
\|A^{-\ga}(B(t) - B(s))A^{-\ga}\| \le L_{\ga,\gb} |t-s|^\gb, \quad (t,s) \in \cI \times \cI,
\ee
holds.\hfill$\triangle$
\end{enumerate}
}
\end{assump}

Notice that under the assumption (S2) the operator $C(t)$
is also an invertible non-negative self-adjoint operator for each $t \in \cI$.
Assumptions of that type were made in \cite{FujTan1973,IchinoseTamura1998,NeiStephZagr2016,NeiStephZagr2017,Yagi1990}.
One checks that the assumptions (S1)-(S3) and the additional assumption $\gb > \ga$ imply the assumptions (I), (VI) and (VII) of \cite{Yagi1990} for the family $\{C(t)\}_{\in\cI}$.
Hence, Proposition 3.1 and Theorem 3.2 of \cite{Yagi1990} yield the existence of a so-called
\textit{solution} (or \textit{evolution}) operator
for the evolution equation \eqref{eq:1.1}, i.e., a strongly continuous, uniformly bounded family of
bounded operators
$\{U(t,s)\}_{(t,s) \in \gD}$, $\gD~:=~\{(t,s) \in \cI \times \cI: 0 \le s \le t\le T\}$,
such that the conditions
\be\la{eq:1.2}
\begin{split}
U(t,t) =& I, \quad \mbox{for} \quad t \in \cI,\\
U(t,r)U(r,s) =& U(t,s), \quad \mbox{for} \quad t,r,s \in \cI \quad \mbox{with} \quad s \le r \le t,
\end{split}
\ee
are satisfied and $u(t) = U(t,0)x$ is for every $x \in \gotH$ a strict solution of \eqref{eq:1.1}, see Definition 1.1 of \cite{Yagi1990}. Because the involved operators are self-adjoint and non-negative one checks that the solution operator consists of contractions.

The aim of the present paper is to analyze  the convergence of the following approximation to the solution operator
$\{U(t,s)\}_{(t,s)\in\gD}$.
 Let
\be
s =: t_0 < t_1 < \ldots < t_{n-1} < t_n := t, \quad t_j := s + j\tfrac{t-s}{n},
\ee
$j = \{0,1,2,\ldots,n\}$, $ n \in \dN$,
be a partition of the interval $[s,t]$. Let
\be\la{eq:1.6}
\begin{split}
G_j(t,s;n) :=& e^{-\tfrac{t-s}{n} A}e^{-\tfrac{t-s}{n} B(t_j)}, \quad j = 0,1,2,\ldots,n,\\
V_n(t,s) :=& G_{n-1}(t,s;n)G_{n-1}(t,s;n)\times \cdots \times  G_2(t,s;n) G_0(t,s;n),\\
\end{split}
\ee
$n \in \dN$. The main result in the paper is the following. If the assumptions (S1)-(S3) are satisfied and in addition the condition $\gb > \ga$ holds,
then the solution operator $\{U(t,s)\}_{(t,s)\in \gD}$ of \cite{Yagi1990} admits the approximation
\be\la{eq:1.8v}
\esssup_{(t,s)\in \gD}\|V_n(t,s) - U(t,s)\| \le \frac{ R_\gb}{n^\gb}, \quad n \in \dN,
\ee
with some constant $R_\gb > 0$. The result shows that the convergence of the approximation $\{V_n(t,s)\}_{(t,s)\in \gD}$ is determined by the smoothness of the perturbation $B(\cdot)$.

If the map $A^{-\ga}B(\cdot)A^{-\ga}: \cI \longrightarrow \cL(\gotH)$ is Lipschitz continuous, then the map is of course H\"older continuous with any exponent $\gga \in (\ga,1)$.
Hence from \eqref{eq:1.8v} it immediately follows that for any $\gga \in (\ga,1)$ there is a constant $R_\gga$ such that
\be\la{eq:1.9v}
\esssup_{(t,s)\in \gD}\|V_n(t,s) - U(t,s)\| \le \frac{R_\gga}{n^\gga}, \quad n \in \dN.
\ee
In particular, for any $\gga$ close to one the estimate \eqref{eq:1.9v} holds.

In \cite{IchinoseTamura1998} the Lipschitz case was considered.
It was shown that there is a constant $\gY_0 > 0$ such that the estimate
\be\la{eq:1.9vv}
\esssup_{t \in \cI}\|V_n(t,0) - U(t,0)\| \le \gY_0\frac{\log(n)}{n}, \quad n=2,3,\ldots\,.
\ee
holds. {{It is obvious that the estimate (\ref{eq:1.9vv}) is stronger than
\bed
\esssup_{t\in\cI}\|V_n(t,0) - U(t,0)\| \le \frac{R_\gga}{n^\gga}, \quad n \in \dN.
\eed
(which follows from \eqref{eq:1.9v}) for any $\gga $ independent of how close it is to one.}

To prove \eqref{eq:1.8v} we use the so-called evolution semigroup approach which allows not only to verify the
estimate \eqref{eq:1.8v} but also to generalise it. The approach
is quite different from the technique used in \cite{IchinoseTamura1998, Yagi1990}. We have
successfully applied this approach already
in \cite{NeiStephZagr2016} and \cite{NeiStephZagr2017}.
The key idea is to forget about the evolution equation \eqref{eq:1.1}
and to consider instead of it the operators  $\cK_0$ and $\cK$ on $\gotK = L^2(\cI,\gotH)$. The operator $\cK_0$ is
the generator of the contraction semigroup $\{\cU_0(\gt)\}_{\gt\in\dR_+}$,
\be\la{eq:1.10-1v}
(\cU_0(\gt)f)(t) := e^{-\gt A}\chi_{\cI}(t-\gt)f(t-\gt), \quad f \in L^2(\cI,\gotH),
\ee
and $\cK$ is given by
\bed
\cK = \cK_0 + \cB, \quad \dom(\cK) = \dom(\cK_0) \cap \dom(\cB),
\eed
where $\cB$ is the multiplication operator induced by the family $\{B(t)\}_{\in\cI}$ in $L^2(\cI,\gotH)$ which is self-adjoint and non-negative,
for more details see Section \ref{sec:II}. It turns out that under the assumptions (S1) and (S2) the operator $\cK$ is the generator of a contraction semigroup $\{\cU(\gt)\}_{\gt\in\dR_+}$
on $L^2(\cI,\gotH)$.
For the pair $\{\cK_0,\cB\}$ we consider the Lie-Trotter product formula.  From the original paper of Trotter \cite{Trotter1959} one gets that
\be\la{eq:1.10v}
\slim_{n\to\infty}\left(e^{-\tfrac{\gt}{n}\cK_0}e^{-\tfrac{\gt}{n}\cB}\right)^n = e^{-\gt\cK}, \quad \gt \in \dR_+ := [0,\infty),
\ee
holds uniformly in $\gt$ on any bounded interval of $\dR_+$. Since $e^{-\gt\cK_0} = 0$ and $e^{-\gt\cK} = 0$ for $\gt \ge T$ one gets even uniformly in $\gt \in \dR_+$.

Previously it was shown that under certain assumptions the strong convergence can be improved to
operator-norm convergence on Hilbert spaces, see
\cite{CachNeiZag2001,CachNeiZag2002,ITTZ2001,NeidhardtZagrebnov1998,Rogava1993} as well as on Banach spaces,
see \cite{CachZag2001}. For an overview  the reader is referred to \cite{NeiStephZagr2018}. To consider the Trotter product formula for evolution equations is relatively new and was firstly realized in \cite{NeiStephZagr2016,NeiStephZagr2017}
for Banach spaces.

In the following we improve the convergence \eqref{eq:1.10v} to operator-norm convergence. We show
that under the assumptions (S1)-(S3) and $\gb > 2\ga-1$ there is a constant
$R_\gb > 0$ such that
\be\la{eq:1.11}
\sup_{\gt\in \dR_+}\left\|\left(e^{-\tfrac{\gt}{n}\cK_0}e^{-\tfrac{\gt}{n}\cB}\right)^n - e^{-\gt\cK}\right\| \le \frac{R_\gb}{n^\gb}, \quad n \in \dN,
\ee
holds.

It turns out that $\cK$ is the generators of an evolution semigroup. This means, there is a propagator $\{U(t,s)\}_{(t,s)\in\gD_0}$,
$\gD_0 := \{(t,s) \in \cI_0 \times \cI_0: s \le t\}$, $\cI_0 = (0,T]$, such that the contraction semigroup $\{\cU(\gt) = e^{-\gt \cK}\}_{\gt\in\dR_+}$ admits the representation
\be\la{eq:1.13v}
(\cU(\gt)f)(t) = U(t,t-\gt)\chi_{\cI}(t-\gt)f(t-\gt), \qquad f\in L^2(\cI,\gotH).
\ee
We recall that a strongly continuous, uniformly bounded  family of bounded operators $\{U(t,s)\}_{(t,s) \in \gD_0}$ is called a propagator
if \eqref{eq:1.2} is satisfied for $\cI_0$ and $\gD_0$ instead of $\cI$ and $\gD$, respectively. Roughly speaking, a propagator is a solution operator restricted to $\gD_0$ where
the assumption that $U(t,0)x$ should be  a strict solution is dropped. Obviously, the notion of a propagator is weaker
then that one of a solution operator. For its existence one needs
{{only the assumptions (S1) and (S2), see Theorem 4.4 and 4.5 in \cite{NeiStephZagr2016} or Theorem 3.3 \cite{NeiStephZagr2017}.}}
Of course, the propagator coincides with the solution operator of \cite{Yagi1990} if the assumptions (S1)-(S3) are satisfied and $\gb > \ga$.

{{By}} Proposition 3.8 of \cite{NeiStephZagr2018} and  \eqref{eq:1.11}
{{we immediately get}} that under the assumptions (S1)-(S3) and $\gb > 2\ga-1$ the estimate
\be\la{eq:1.14v}
\esssup_{(t,s)\in \gD_0}\|V_n(t,s) - U(t,s)\| \le \frac{R_\gb}{n^\gb}, \quad n \in \dN,
\ee
%
holds, where the constant $R_\gb$ is that one of \eqref{eq:1.11}.
Notice that the condition $\gb > 2\ga-1$ is weaker than $\gb > \ga$, i.e., if $\gb > \ga$, then $\gb > 2\ga-1$ holds.
If $\ga$ satisfies the condition $\tfrac{1+\gb}{2} > \ga > \gb$, then the assumptions (I), (VI) and (VII) of \cite{Yagi1990} for the family $\{C(t)\}_{\in\cI}$
are not valid but nevertheless we get an approximation of the corresponding propagator $\{U(t,s)\}_{(t,s)\in\gD_0}$. 


The results are stronger than those in \cite{NeiStephZagr2016,NeiStephZagr2017} for Banach spaces.
In \cite{NeiStephZagr2016} a convergence rate $O(n^{-(\gb-\ga)})$ was found, whereas in
\cite{NeiStephZagr2017} the Lipschitz case has been considered and the rate $O(n^{-(1-\ga)})$ for $\ga \in (\tfrac{1}{2},1)$ was proved.

It turns out that the result \eqref{eq:1.8v} can be hardly improved. Indeed in \cite{NeiStephZagr2018b} the simple case
$\gotH := \dC$ and $A = 1$ was considered. In that case the family $\{B(t)\}_{t\in\cI}$ reduces to a non-negative
bounded measurable function: $b(\cdot): \cI \longrightarrow \dR$ which has to be H\"older continuous with exponent
$\gb \in (0,1)$. For that case it was found in \cite{NeiStephZagr2018b} that the convergence rate is $O(n^{-\gb})$
which coincides with \eqref{eq:1.8v}. For the Lipschitz case it was found $O(n^{-1})$ which suggests that \eqref{eq:1.9v} and \eqref{eq:1.9vv} might be {{not}} optimal.\\

The paper is organised as follows. In Section \ref{sec:II} we give a short introduction into evolution semigroups. For more details the reader is referred to \cite{Nei1981,NeiStephZagr2016,
NeiZag2009,Nickel1996}. The results are proven in Section \ref{sec:III}. In Section \ref{sec:III.1} we prove auxiliary results which are necessary to prove the main results of Section
\ref{sec:III.2}.\\

{\bf Notation:} Spaces, in particular, Hilbert are denoted by Gothic capital letters like $\gotH$, $\gotK$, etc.
Operators are denoted by Latin or italic capital letters. The Banach space of bounded operators on space is denoted by $\cL(\cdot)$, like $\cL(\gotH)$.
 We set $\dR_+ := [0,\infty)$. If a function is called measurable, then it means Lebesgue measurable.
The notation ``a.e.'' means that a statement or relation fails at most for a set of Lebesgue
measure zero. In the following we use the notation $\esssup_{(t,s)\in \gD}$ or $\esssup_{(t,s)\in \gD_0}$.
In that case the Lebesgue measure of $\dR^2$ is meant.

We point out that we call operator $\cK$ to be generator of a semigroup $\{e^{-\tau \cK}\}_{\tau\in\dR_+}$, see e.g. \cite{ReedSim-II1975},
although in \cite{EngNag2000, Kato1980} it is the operator $ - \cK$, which is called the generator.

\section{Evolution semigroups}\la{sec:II}

Below we consider the Hilbert space
$\gotK = L^2(\cI,\gotH)$ consisting of all measurable functions $f(\cdot): \cI \longrightarrow \gotH$
such that the norm function $\|f(\cdot)\|: \cI \longrightarrow \dR_+$ is square integrable.
Further, let $D_0$ be the generator of the right-hand sift semigroup on $L^2(\cI,\gotH)$, i.e.
\bed
(e^{-\gt D_0}f)(t) = \chi_\cI(t-\gt)f(t-\gt), \quad t \in \cI, \quad \gt \in \dR_+, \quad f \in L^2(\cI,\gotH).
\eed
Notice that $e^{-\gt D_0} = 0$ for $\gt \ge T$. The generator $D_0$ is given by
\be\la{eq:2.3-1}
\begin{split}
(D_0f)(t) :=& \frac{\partial}{\partial t}f(t),\\
f \in \dom(D_0) :=& W^{1,2}_0(\cI,\gotH) = \{f \in W^{1,2}(\cI,\gotH): f(0) = 0\}.
\end{split}
\ee
We remark that $D_0$ is a closure of the maximal symmetric operator and its semigroup
 is contractive.

Further we consider the multiplication operator $\cA$ in $L^2(\cI,\gotH)$,
\bed
\begin{split}
(\cA f)(t) :=& Af(t), \\
f \in \dom(\cA) :=&
\left\{
 f \in L^2(\cI,\gotH):
\begin{matrix}
&f(t) \in \dom(A) \quad \mbox{for a.e. $t \in \cI$}\\
& Af(t) \in L^2(\cI,\gotH).
\end{matrix}
\right\}
\end{split}
\eed
If (S1) is satisfied, then $\cA$ is self-adjoint and $\cA \ge I_{L^2(\cI,\gotH)}$. For the resolvent one has the representation
\bed
((\cA - z)^{-1}f)(t)= (A - z)^{-1}f(t), \; t \in \cI_0, \; f \in L^2(\cI,\gotH), \quad z \in \rho(\cA) = \rho(A),
\eed
and the corresponding semigroup $\{e^{-\gt \cA}\}_{\gt \in \dR_+}$ is given by
\be\la{eq:2.3-1-1}
(e^{-\gt \cA}f)(t) = e^{-\gt A}f(t), \; t \in \cI, \; f \in L^2(\cI,\gotH), \quad \gt \in \dR_+.
\ee
Notice that the operators $e^{-\gt D_0}$ and $e^{-\gt \cA}$ commute. Let us consider the contraction semigroup
\be\la{eq:2.3-1-2}
\cU_0(\gt) := e^{-\gt D_0} e^{-\gt \cA}, \quad \gt \in \dR_+.
\ee
Obviously, the semigroup $\{\cU_0(\tau)\}_{\gt \in \dR_+}$ admits the representation \eqref{eq:1.13v}.
Due to the maximal $L^2$-regularity of $A$, cf. \cite{Arendt2007}, its generator $\cK_0$
is given by
\bed
\cK_0 := D_0 + \cA, \quad \dom(\cK_0) := \dom(D_0) \cap \dom(\cA) .
\eed

Further we consider the multiplication operator $\cB$, defined as
\be\la{eq:2.3-2}
\begin{split}
(\cB f)(t) :=& B(t)f(t)\\
f \in \dom(\cB) :=& \left\{f\in L^2(\cI,\gotH):
\begin{matrix}
&f(t) \in \dom(B(t)) \quad \mbox{for a.e. $t \in \cI$}\\
& B(t)f(t) \in L^2(\cI,\gotH)
\end{matrix}
\right\}\; .
\end{split}
\ee
If (S1) is satisfied, then $\cB$ is self-adjoint and non-negative. For the resolvent we have the representation
\bed
((\cB - z)^{-1}f)(t) = (B(t) - z)^{-1}f(t), \quad f \in L^2(\cI,\gotH), \quad z \in \dC_\pm,
\eed
for a.e. $t \in \cI$. The semigroup $\{e^{-\gt \cB}\}_{\gt \in \dR_+}$, admits the representation
\bed
(e^{-\gt \cB}f)(t) = e^{-\gt B(t)} f(t), \quad f \in L^2(\cI,\gotH),
\eed
for a.e. $t \in \cI$.

By \cite[Proposition 4.4]{NeiStephZagr2016} we get that under the assumptions (S1) and (S2) the operator
\bed
\cK := \cK_0 + \cB, \quad \dom(\cK) := \dom(\cK_0) \cap \dom(\cB),
\eed
is a generator of a contraction semigroup on $L^2(\cI,\gotH)$. From \cite[Proposition 4.5]{NeiStephZagr2016} we obtain that $\cK$ is the generator of an evolution semigroup.
Because $\cK$ is a generator of a contraction semigroup it turns out that the corresponding
propagator consists of contractions.

If $\{U(t,s)\}_{(t,s)\in \gD_0}$ is a propagator, then by virtue of
\eqref{eq:1.13v} it defines a semigroup, which by definition is an evolution semigroup. It
turns out that there is a one-to-one correspondence between the set of
evolution semigroups on $L^2(\cI,\gotH)$ and propagators. It is interesting to note that
evolution generators can be characterize quite independent from a propagator,
see \cite[Theorem 2.8]{Nei1981} or \cite[Theorem 3.3]{NeiStephZagr2016}.

\section{Results}\la{sec:III}

We start with a general observation concerning the conditions (S1)-(S3).
\br\la{rem:2.2}
{\em
If the conditions (S1)-(S3) are satisfied for some $\ga \in [0,1)$, then they are also satisfied for
each $\alpha^\prime \in (\ga,1]$.
Indeed, the condition (S1) is obviously satisfied. To show (S2) we note that
$\dom(A^{\alpha^\prime}) \subseteq \dom(A^\ga) \subseteq \dom(B(t))$ for a.e. $t \in \cI$. Using the
representation
\be\la{eq:2.3}
B(t)A^{-{\alpha^\prime}} = B(t)A^{-\ga}A^{-({\alpha^\prime}-\ga)}
\ee
for a.e. $t \in \cI$ we get that the map $B(\cdot)A^{-{\alpha^\prime}}: \cI \longrightarrow \cL(\gotH)$
is strongly measurable. Further, from \eqref{eq:2.3}
\bed
C_{\alpha^\prime} := \esssup_{t\in\cI}\|B(t)A^{-{\alpha^\prime}}\| \le \esssup_{t\in\cI}\|B(t)A^{-\ga}\| =
C_\ga < \infty.
\eed
Moreover we have
\bed
\|A^{-{\alpha^\prime}}(B(t) - B(s))A^{-{\alpha^\prime}}\|
\le \|A^{-\ga}(B(t) - B(s))A^{-\ga}\| \le L_{\ga,\gb} |t-s|^\gb,
\eed
$(t,s) \in \cI \times \cI$, which shows that there is a constant $L_{{\alpha^\prime},\gb} \le L_{\ga,\gb}$
such that
\bed
\|A^{-{\alpha^\prime}}(B(t) - B(s))A^{-{\alpha^\prime}}\| \le L_{{\alpha^\prime},\gb} |t-s|^\gb \quad (t,s)
\in \cI \times \cI.
\eed
holds for $(t,s) \in \cI \times \cI$.\hfill$\triangle$
}
\er

Since $A$ is self-adjoint and non-negative, one has
$\|A^\gamma e^{-\tau A}\|\leq {1}/{\tau^\gamma}$ for any $\tau\in\dR_+$ and $\gamma\in [0,1]$. Then by virtue of \eqref{eq:2.3-1-1} and of \eqref{eq:1.10-1v}, \eqref{eq:2.3-1-2} one gets
the estimates
\be\label{EstimateAorK0ExpA}
 \|\cA^\gamma e^{-\tau \cA}\| = \|\overline{e^{-\tau \cA} \cA^\gamma}\| \leq \frac{1}{\tau^\gamma} \;\;\mathrm{and}\;\;\|\cA^\gamma e^{-\tau \cK_0}\| = \|\overline{ e^{-\tau \cK_0} \cA^\gamma}\| \leq
 \frac{1}{\tau^\gamma}.
\ee

\subsection{Auxiliary estimates}\la{sec:III.1}

In this section we prove a series of estimates necessary to establish \eqref{eq:1.11}.
The following lemma can be partially derived from \cite[Lemma 7.4]{NeiStephZagr2016}.

\bl\la{lem:3.3}
Let the assumptions (S1) and (S2) be satisfied. Then for any $\gamma \in [\ga,1)$
there is a  constants $\gL_{\gamma} \geq 1$ such that
\be\la{eq:3.1a}
\|\cA^{\gamma} e^{-\gt \cK}\| \le \frac{\gL_{\gamma}}{\gt^{\gamma}}
\quad \mbox{and} \quad
\|\overline{e^{-\gt \cK}\cA^{\gamma}}\| \le \frac{\gL_{\gamma}}{\gt^{\gamma}}, \qquad \gt > 0,
\ee
holds.
\el
\begin{proof}
The proof of the first estimate follows from Lemma 7.4 of \cite{NeiStephZagr2016} and Remark \ref{rem:2.2}.
The second estimate can be proved similarly as the first one. One has only to modify the proof of Lemma 7.4 of \cite{NeiStephZagr2016}
in a suitable manner and to apply again Remark \ref{rem:2.2}.
\eproof
\end{proof}
\br
{\em
Lemma 2.1 of \cite{IchinoseTamura1998} claims that for the Lipschitz case  the solution operator $\{U(t,s)\}_{(t,s) \in \gD}$ of \eqref{eq:1.1} admits the estimates
\bed
\sup_{(t,s)\in\gD}(t-s)^\gga \|A^\gga U(t,s)\| < \infty
\quad \mbox{and} \quad
\sup_{(t,s)\in\gD}(t-s)^\gga \| U(t,s)A^\gga\| < \infty
\eed
for $\gga \in [0,1]$. Proposition 2.1 of \cite{NeiStephZagr2018b} immediately yields that the corresponding evolution semigroup $\{\cU(\gt) = e^{-\gt\cK}\}_{\tau\in\dR_+}$
satisfies the estimates \eqref{eq:3.1a} for $\gga=1$. \hfill$\triangle$
}
\er

Now we set
\be\la{eq:3.1b}
\cT(\gt) = e^{-\gt\cK_0}e^{-\gt \cB}, \quad \gt \in \dR_+.
\ee
Notice that $\cT(\gt) = 0$ for $\gt \ge T$.
\bl\label{lem:3.6}
Let the assumptions (S1) and (S2) be satisfied. Then for any $\gga \in [\ga,1)$
the estimates
\be\la{eq:3.4}
\|\cA^{-\gga}(\cT(\gt) - \cU(\gt))\| \le 2 C_\gga \gt
 \quad \mbox{and} \quad
\|(\cT(\gt) - \cU(\gt))\cA^{-\gga}\| \le 2 C_\gga\gt,
\ee
hold for $\gt \ge 0$, where
\be\la{eq:3.4-1}
C_\gga := \esssup_{t\in \cI}\|B(t)A^{-\gga}\| .
\ee
\el
\begin{proof}
The proof of the first estimate follows from Lemma 7.6 of \cite{NeiStephZagr2016} and Remark \ref{rem:2.2}.
The specific constant $2 C_\gga $ is obtained  following carefully the proof of Lemma 7.6 of \cite{NeiStephZagr2016}.
The second estimate can be proved modifying the proof of the first
estimate in an obvious manner. \eproof
\end{proof}

\bl\label{lem:3.7}
Let the assumptions (S1)-(S3) be satisfied. Then for any $\gga \in [\ga,1)$ and $\gb \in (0,1)$ there is a constant $Z_{\gga,\gb} > 0$ such that
\be\la{eq:3.10a}
\|\cA^{-\gga}(\cT(\gt)- \cU(\gt))\cA^{-\gga}\| \le Z_{\gga,\gb} \gt^{1+ \varkappa}, \quad \gt \in \dR_+,
\ee
holds where $\varkappa := \min\{\gga,\gb\}$.
\el

\begin{proof}
We use the representation:
\bed
\begin{split}
\frac{d}{d\gs}e^{-(\gt-\gs)\cK}e^{-\gs\cK_0}e^{-\gs\cB} =&
e^{-(\gt-\gs)\cK}\left\{\cK e^{-\gs\cK_0} - e^{-\gs\cK_0}\cK_0 - e^{-\gs\cK_0}\cB\right\}e^{-\gs\cB}\\
=& e^{-(\gt-\gs)\cK}\left\{\cB e^{-\gs\cK_0} -e^{-\gs\cK_0}\cB\right\}e^{-\gs\cB}
\end{split}
\eed
%
%
%
%
%
%
which yields
\bed
\begin{split}
e^{-(\gt-\gs)\cK}&\left\{\cB e^{-\gs\cK_0} - e^{-\gs\cK_0}\cB\right\}e^{-\gs\cB}\\
=& (e^{-(\gt-\gs)\cK}-e^{-(\gt-\gs)\cK_0})\Big\{\cB e^{-\gs\cK_0} - e^{-\gs\cK_0}\cB\Big\}(e^{-\gs\cB}-I) +\\
&e^{-(\gt-\gs)\cK_0}\Big\{\cB e^{-\gs\cK_0} - e^{-\gs\cK_0}\cB\Big\}(e^{-\gs\cB}-I)+\\
&(e^{-(\gt-\gs)\cK} - e^{-(\gt-\gs)\cK_0})\Big\{\cB e^{-\gs\cK_0} - e^{-\gs\cK_0}\cB\Big\} +\\
&e^{-(\gt-\gs)\cK_0}\Big\{\cB e^{-\gs\cK_0} - e^{-\gs\cK_0}\cB\Big\}\; .
\end{split}
\eed
Hence, we obtain the identity
\bed
\begin{split}
\cA^{-\gga}&e^{-(\gt-\gs)\cK}\left\{\cB e^{-\gs\cK_0} - e^{-\gs\cK_0}\cB\right\}e^{-\gs\cB}\cA^{-\gga}\\
=& \cA^{-\gga}(e^{-(\gt-\gs)\cK}-e^{-(\gt-\gs)\cK_0})\Big\{\cB e^{-\gs\cK_0} - e^{-\gs\cK_0}\cB\Big\}(e^{-\gs\cB}-I)\cA^{-\gga} +\\
&e^{-(\gt-\gs)\cK_0}\cA^{-\gga}\Big\{\cB e^{-\gs\cK_0} - e^{-\gs\cK_0}\cB\Big\}(e^{-\gs\cB}-I)\cA^{-\gga}+\\
&\cA^{-\gga}(e^{-(\gt-\gs)\cK} - e^{-(\gt-\gs)\cK_0})\Big\{\cB e^{-\gs\cK_0} - e^{-\gs\cK_0}\cB\Big\}\cA^{-\gga} +\\
&e^{-(\gt-\gs)\cK_0}\cA^{-\gga}\Big\{\cB e^{-\gs\cK_0} - e^{-\gs\cK_0}\cB\Big\}\cA^{-\gga}
\end{split}
\eed
which leads to the estimate
\begin{align}
\Big\|&\cA^{-\gga}e^{-(\gt-\gs)\cK}\left\{\cB e^{-\gs\cK_0} - e^{-\gs\cK_0}\cB\right\}e^{-\gs\cB}\cA^{-\gga}\Big\| \le \nonumber\\
& \Big\|\cA^{-\gga}(e^{-(\gt-\gs)\cK}-e^{-(\gt-\gs)\cK_0})\Big\{\cB e^{-\gs\cK_0} - e^{-\gs\cK_0}\cB\Big\}(e^{-\gs\cB}-I)\cA^{-\gga}\Big\| +\label{eq:3.5}\\
&\Big\|e^{-(\gt-\gs)\cK_0}\cA^{-\gga}\Big\{\cB e^{-\gs\cK_0} - e^{-\gs\cK_0}\cB\Big\}(e^{-\gs\cB}-I)\cA^{-\gga}\Big\|+\label{eq:3.6}\\
&\Big\|\cA^{-\gga}(e^{-(\gt-\gs)\cK} - e^{-(\gt-\gs)\cK_0})\Big\{\cB e^{-\gs\cK_0} - e^{-\gs\cK_0}\cB\Big\}\cA^{-\gga}\Big\| +\label{eq:3.7}\\
&\Big\|e^{-(\gt-\gs)\cK_0}\cA^{-\gga}\Big\{\cB e^{-\gs\cK_0} - e^{-\gs\cK_0}\cB\Big\}\cA^{-\gga}\Big\|\label{eq:3.8}
\end{align}

Note that by (\ref{EstimateAorK0ExpA}) and (\ref{eq:3.4-1}) one gets
\be\la{eq:3.8v}
\begin{split}
\|e^{-\gs\cK_0}\cB\| =& \| e^{-\gs\cK_0}\cA^\gga\overline{\cA^{-\gga}\cB}\| \le \gs^{-\gga} \|\overline{\cA^{-\gga}\cB}\| =  C_\gga \gs^{-\gga},\\
\|\cB e^{-\gs\cK_0}\| =& \| \cB \cA^{-\gga} \cA^\gga e^{-\gs\cK_0}\| \le \gs^{-\gamma} \|\cB\cA^{-\gga}\| = C_\gga \gs^{-\gga},
\end{split}
\ee
for $\gs > 0$. Due to \eqref{eq:3.8v} one estimates \eqref{eq:3.5} as
\bed
\begin{split}
\Big\|\cA^{-\gga}&(e^{-(\gt-\gs)\cK}-e^{-(\gt-\gs)\cK_0})\Big\{\cB e^{-\gs\cK_0} - e^{-\gs\cK_0}\cB\Big\}(e^{-\gs\cB}-I)\cA^{-\gga}\Big\| \\
\le & 2 \ C_\gga \gs^{-\gga} \, \Big\|\cA^{-\gga}(e^{-(\gt-\gs)\cK}-e^{-(\gt-\gs)\cK_0})\Big\|\Big\|(e^{-\gs\cB}-I)\cA^{-\gga}\Big\|.
\end{split}
\eed
Since the fundamental properties of semigroups and (\ref{eq:3.4-1}) yield
\be\la{eq:3.16}
\Big\|(e^{-\gs\cB}-I)\cA^{-\gga}\Big\| \le \|\cB \cA^{-\gga}\|\;\gs \le C_\gga \;\gs, \quad \gs \in \dR_+,
\ee
and
\bed
\Big\|\cA^{-\gga}(e^{-(\gt-\gs)\cK}-e^{-(\gt-\gs)\cK_0})\Big\| \le C_\gga(\gt-\gs), \quad \gs \in \dR_+,
\eed
we get for \eqref{eq:3.5} the estimate
\be\la{eq:3.13a}
\begin{split}
\Big\|\cA^{-\gga}&(e^{-(\gt-\gs)\cK}-e^{-(\gt-\gs)\cK_0})\Big\{\cB e^{-\gs\cK_0} - e^{-\gs\cK_0}\cB\Big\}(e^{-\gs\cB}-I)\cA^{-\gga}\Big\| \\
\le & 2\ C^3_\gga \gs^{1-\gga}(\gt-\gs), \quad 0 \le \gs \le \gt.
\end{split}
\ee

To estimate \eqref{eq:3.6} we recall that $\cl A$ and $\cl K_0$ commute. Then by (\ref{eq:3.4-1}) one gets
\be\la{eq:3.14a}
\begin{split}
\Big\|e^{-(\gt-\gs)\cK_0}&\cA^{-\gga}\Big\{\cB e^{-\gs\cK_0} - e^{-\gs\cK_0}\cB\Big\}(e^{-\gs\cB}-I)\cA^{-\gga}\Big\| \\
\le &2C_\gga\|(e^{-\gs\cB}-I)\cA^{-\gga}\| \le 2\   C^2_\gga \,\gs ,
\quad 0 \le \gs \le \gt\, ,
\end{split}
\ee
where \eqref{eq:3.16}  was used for the last inequality.

To estimate \eqref{eq:3.7} we have
\be\la{eq:3.15a}
\begin{split}
&\Big\|\cA^{-\gga}(e^{-(\gt-\gs)\cK} - e^{-(\gt-\gs)\cK_0})\Big\{\cB e^{-\gs\cK_0} - e^{-\gs\cK_0}\cB\Big\}\cA^{-\gga}\Big\|\\
\le & 2\  C_\gga \|\cA^{-\gga}(e^{-(\gt-\gs)\cK} - e^{-(\gt-\gs)\cK_0})\|
\le  2\  C^2_\gga  (\gt-\gs), \quad 0 \le \gs \le \gt.
\end{split}
\ee

To estimate \eqref{eq:3.8} we use the representation
\bed
\begin{split}
\cA^{-\gga}&\Big\{\cB e^{-\gs\cK_0} - e^{-\gs\cK_0}\cB\Big\}\cA^{-\gga}\\
 =& \cA^{-\gga}\cB (e^{-\gs \cA} - I)\cA^{-\gga}e^{-\gs\cD_0} - e^{-\gs D_0}\cA^{-\gga}(e^{-\gs \cA}-I)\cB\cA^{-\gga}  +\\
& \cA^{-\gga}\Big\{\cB e^{-\gs D_0}- e^{-\gs\cD_0}\cB\Big\}\cA^{-\gga} \ ,
\end{split}
\eed
that yields
\bed
\begin{split}
\Big\|\cA^{-\gga}&\Big\{\cB e^{-\gs\cK_0} - e^{-\gs\cK_0}\cB\Big\}\cA^{-\gga}\Big\|\\
 \le& \Big\|\cA^{-\gga}\cB (e^{-\gs \cA} - I)\cA^{-\gga}\Big\| + \Big\|\cA^{-\gga}(e^{-\gs \cA}-I)\cB\cA^{-\gga}\Big\|  +   \\
& \Big\|\cA^{-\gga}\Big\{\cB e^{-\gs D_0}- e^{-\gs\cD_0}\cB\Big\}\cA^{-\gga}\Big\|\;.
\end{split}
\eed
Then by (\ref{eq:3.4-1}) and by semigroup properties one gets
\bed
\Big\|\cA^{-\gga}\cB (e^{-\gs \cA} - I)\cA^{-\gga}\Big\| \le \frac{C_\gga }{\gga}\gs^\gga, \quad \gs \in \dR_+,
\eed
and
\bed
\Big\|\cA^{-\gga}(e^{-\gs \cA}-I)\cB\cA^{-\gga}\Big\| \le \frac{C_\gga }{\gga}\gs^\gga, \quad \gs \in \dR_+.
\eed
The last term is obtained by using (S3) (for $\alpha$ substituted by $\gamma$) and the
definitions \eqref{eq:2.3-1}, \eqref{eq:2.3-2}:
\bed
\Big\|\cA^{-\gga}\Big\{\cB e^{-\gs D_0}- e^{-\gs\cD_0}\cB\Big\}\cA^{-\gga}\Big\| \le \gs^\gb \,L_{\gga,\gb},
\quad \gs \in \dR_+.
\eed
Summing up one finds that
\be\la{eq:3.12}
\Big\|\cA^{-\gga}\Big\{\cB e^{-\gs\cK_0} - e^{-\gs\cK_0}\cB\Big\}\cA^{-\gga}\Big\| \le \frac{2C_\gga }{\gga}\gs^\gga + L_{\gga,\gb} \gs^\gb, \quad \gs \in \dR_+.
\ee

Using the estimates \eqref{eq:3.13a}, \eqref{eq:3.14a}, \eqref{eq:3.15a} and \eqref{eq:3.12} we get
\bed
\begin{split}
\Big\|\cA^{-\gga}e^{-(\gt-\gs)\cK}&\left\{\cB e^{-\gs\cK_0} - e^{-\gs\cK_0}\cB\right\}e^{-\gs\cB}\cA^{-\gga}\Big\|\\
\le & 2C^3_\gga \gs^{1-\gga}(\gt-\gs) + 2 C^2_\gga\,\gs + 2 C^2_\gga (\gt-\gs)+ \frac{2C_\gga }{\gga}\gs^\gga + L_{\gga,\gb} \gs^\gb\\
= & 2C^3_\gga \gs^{1-\gga}(\gt-\gs) + 2 C^2_\gga\,\gt + \frac{2C_\gga }{\gga}\gs^\gga + L_{\gga,\gb} \gs^\gb ,
\end{split}
\eed
or returning back to its derivative
\bed
\begin{split}
\Big\|\cA^{-\gga}\frac{d}{d\gs}&e^{-(\gt-\gs)\cK} e^{-\gs\cK_0}e^{-\gs\cB}\cA^{-\gga}\Big\|\\
\le & 2C^3_\gga \gs^{1-\gga}(\gt-\gs) + 2 C^2_\gga\,\gt + \frac{2C_\gga }{\gga}\gs^\gga +  L_{\gga,\gb} \gs^\gb, \quad 0 \le \gs \le \gt.
\end{split}
\eed
Since
\bed
\cA^{-\gga}(e^{-\gt\cK_0}e^{-\gt\cB} - e^{-\gt\cK})\cA^{-\gga} = \int^\gt_0 \cA^{-\gga}\frac{d}{d\gs}e^{-(\gt-\gs)\cK}e^{-\gs\cK_0}e^{-\gs\cB}\cA^{-\gga}d\gs ,
\eed
we find the estimate
\bed
\Big\|\cA^{-\gga}(e^{-\gt\cK_0}e^{-\gt\cB} - e^{-\gt\cK})\cA^{-\gga}\Big\| \le
\int^\gt_0 \Big\|\cA^{-\gga}\frac{d}{d\gs}e^{-(\gt-\gs)\cK} e^{-\gs\cK_0}e^{-\gs\cB}\cA^{-\gga}\Big\|d\gs ,
\eed
which yields the estimate
\bed
\begin{split}
\Big\|\cA^{-\gga}&(e^{-\gt\cK_0}e^{-\gt\cB} - e^{-\gt\cK})\cA^{-\gga}\Big\| \\
\le & 2C^3_\gga \int^\gt_0 \gs^{1-\gga}(\gt -\gs)d\gs + 2C^2_\gga \gt^2  + \frac{2C_\gga }{(1+\gga)\gga}\gt^{1+\gga} + \frac{L_{\gga,\gb}}{1+\gb}\gt^{1+\gb}
\end{split}
\eed
or after integration:
\bed
\begin{split}
\Big\|\cA^{-\gga}&(e^{-\gt\cK_0}e^{-\gt\cB} - e^{-\gt\cK})\cA^{-\gga}\Big\| \\
\le & \frac{2C^3_\gga}{(2-\gga)(3-\gga)} \gt^{3-\gga} + 2C^2_\gga \gt^2  + \frac{2C_\gga }{(1+\gga)\gga}\gt^{1+\gga} + \frac{L_{\gga,\gb}}{1+\gb}\gt^{1+\gb},
\quad \gt \in \dR_+ \ .
\end{split}
\eed
If $\gga \in [\ga,1)$ and $\gga \le \gb < 1$, then one gets
\begin{align}
\Big\|\cA^{-\gga}&(e^{-\gt\cK_0}e^{-\gt\cB} - e^{-\gt\cK})\cA^{-\gga}\Big\| \la{eq:3.18v}  \\
\le & \Big(\frac{2C^3_\gga}{(2-\gga)(3-\gga)} \gt^{2-2\gga} + 2C^2_\gga\gt^{1-\gga} + \frac{2C_\gga }{(1+\gga)\gga} + \frac{L_{\gga,\gb}}{1+\gb}\gt^{\gb-\gga}\Big)\gt^{1+\gga},\nonumber
\end{align}
$\gt \in \dR_+$, which immediately yields \eqref{eq:3.10a}.

If $\gga \in [\ga,1)$ and $0 < \gb < \gga$, then one can rewrite it as
\begin{align}
\Big\|\cA^{-\gga}&(e^{-\gt\cK_0}e^{-\gt\cB} - e^{-\gt\cK})\cA^{-\gga}\Big\| \la{eq:3.19v}\\
\le & \left(\frac{2C^3_\gga}{(2-\gga)(3-\gga)} \gt^{2-\gga-\gb} + 2C^2_\gga \gt^{1-\gb}  + \frac{2C_\gga }{(1+\gga)\gga}\gt^{\gga-\gb} + \frac{L_{\gga,\gb}}{1+\gb}\right)\gt^{1+\gb},\nonumber
\end{align}
$\gt \in \dR_+$, which shows \eqref{eq:3.10a} for this choice of $\gamma$ and $\beta$.
\eproof
\end{proof}
\br
{\em
For $\gga \in [\ga,1)$ and $\gga \le \gb < 1$
we find from \eqref{eq:3.18v} that
\be\la{eq:3.11a}
Z_{\gga,\gb} := \frac{2C^3_\gga}{(2-\gga)(3-\gga)} T^{2-2\gga} + 2C^2_\gga T^{1-\gga} + \frac{2C_\gga }{(1+\gga)\gga} + \frac{L_{\gga,\gb}}{1+\gb}T^{\gb-\gga}\;.
\ee
For $\gga \in [\ga,1)$ and $0 < \gb < \gga$ we get from \eqref{eq:3.19v} that
\bed
Z_{\gga,\gb} := \frac{2C^3_\gga}{(2-\gga)(3-\gga)} T^{2-\gga-\gb} + 2C^2_\gga T^{1-\gb}  + \frac{2C_\gga }{(1+\gga)\gga}T^{\gga-\gb} + \frac{L_{\gga,\gb}}{1+\gb} \ .
\eed
Here $C_\gga := \esssup_{t\in\cI}\|\cB \cA^{-\gga}\|$, see (\ref{eq:3.1a}), and $L_{\gga,\gb}$ is the H\"older
constant of the function $A^{-\gga}B(\cdot)A^{-\gga}: \cI \longrightarrow \cL(\gotH)$, see (S3).
}
\er
\bl\la{lem:5.1}
Let the assumptions (S1) and (S2) be satisfied. Then
\be\la{eq:3.19}
\|\cA^\gga(\cU(\gt) - \cT(\gt))\cA^{-\gga}\| \le
\left(\tfrac{\gL_\gga}{1-\gga} + 1\right)C_\gga \gt^{1-\gga},
\quad \gt \in \dR_+,
\ee
for $\gga \in [\ga,1)$.
\el
\begin{proof}
We use the representation
\bed
\cU(\gt) - \cT(\gt)
= e^{-\gt\cK} - e^{-\gt\cK_0} + e^{-\gt\cK_0}(I - e^{-\gt\cB})
\eed
which yields
\bed
\cA^\gga(\cU(\gt) - \cT(\gt))\cA^{-\gga}
= \cA^\gga(e^{-\gt\cK} - e^{-\gt\cK_0})\cA^{-\gga} +
\cA^\gga e^{- \tau \cK_0}(I - e^{-\gt\cB})\cA^{-\gga}
\eed

Using
%
%
%
%
the semigroup property we obtain for the first term the representation:
\bed
\cA^\gga(e^{-\gt\cK} - e^{-\gt\cK_0})\cA^{-\gga}  = -\int^\gt_0 \cA^\gga e^{-(\gt-x)\cK}\cB\cA^{-\gga}
e^{-x\cK_0} dx \ .
\eed
Hence, by (\ref{eq:3.1a}) and (\ref{eq:3.4-1}) one gets
\begin{eqnarray}\label{eq:3.19-1}
&&\|\cA^\gga(e^{-\gt\cK} - e^{-\gt\cK_0})\cA^{-\gga}\| \le
\int^\gt_0 \|\cA^\gga e^{-(\gt-x)\cK}\| \|\cB\cA^{-\gga}\| dx \le \nonumber \\
&& \gL_\gga C_\gga \int^\gt_0 \frac{1}{(\gt-x)^\gga} dx  = \frac{\gL_\gga C_\gga}{1-\gga}\gt^{1-\gga}
\ .
\end{eqnarray}
To estimate the second term we use the inequality
\begin{equation*}
\|\cA^\gga e^{-{\gt}\cK_0}(I - e^{-\gt\cB})\cA^{-\gga}\|
\leq \|\cA^\gga e^{- \tau \cK_0}\| \|(I - e^{-\gt\cB})\cA^{-\gga}\| .
\end{equation*}
Using (\ref{EstimateAorK0ExpA}) and (\ref{eq:3.16}) we estimate the second term as
\begin{equation}\label{eq:3.19-2}
\|\cA^\gga e^{- \tau\cK_0}(I - e^{-\gt\cB})\cA^{-\gga}\| \leq
C_\gga \ \tau^{1- \gamma} \ .
\end{equation}
Now the estimates (\ref{eq:3.19-1}) and (\ref{eq:3.19-2}) yield \eqref{eq:3.19}. 
\eproof
\end{proof}
\bl\la{lem:5.2}
Let the assumption (S1) be satisfied. If for each $\gga \in [\ga,1)$ there is a constant $M_\gga > 0$ such that
\be\la{eq:3.20}
\|\cA^\gga \cT(\gt)^m\| \le \frac{M_\gga}{(m \gt)^\gga}, \quad m \in \dN,\quad \gt \in \dR_+,
\ee
holds for $\cT(\gt)$ defined in (\ref{eq:3.1b}), then
\be\la{eq:3.21}
\|\cA^\gs \cT(\gt)^m\| \le \frac{M^\gd_\gga}{(m \gt)^\gs}, \quad m \in \dN,
\ee
holds for $\gs \in [0,\gga]$ and $\gd: = {\gs}/{\gga}$.
\el
\begin{proof}
If \eqref{eq:3.20} is satisfied, then
\bed
\|\overline{(\cT(\gt)^*)^m\cA^\gga} \| \le \frac{M_\gga}{(m \gt)^\gga}, \quad m \in \dN,
\eed
holds, which is equivalent to
\bed
\cA^\gga \cT(\gt)^m\overline{(\cT(\gt)^*)^m\cA^\gga }\le \frac{M^2_\gga}{(m \gt)^{2\gga}}, \quad m \in \dN,
\eed
or
\bed
\cT(\gt)^m(\cT(\gt)^*)^m\le \frac{M^2_\gga}{(m \gt)^{2\gga}}\cA^{-2\gga}, \quad m \in \dN.
\eed
Let $\gd \in (0,1)$. Using the Heinz inequality \cite[Theorem X.4.2]{BirSolom1987} we get
\bed
\Big(\cT(\gt)^m(\cT(\gt)^*)^m\Big)^\gd\le \frac{M^{2\gd}_\gga}{(m \gt)^{2\gd\gga}}\cA^{-2\gd\gga}, \quad m \in \dN.
\eed
Since $\cT(\gt)^m(\cT(\gt)^*)^m$ is a self-adjoint contraction we get
\bed
\cT(\gt)^m(\cT(\gt)^*)^m \le \Big(\cT(\gt)^m(\cT(\gt)^*)^m\Big)^\gd, \quad m \in \dN,
\eed
which yields
\bed
\cT(\gt)^m(\cT(\gt)^*)^m \le \frac{M^{2\gd}_\gga}{(m \gt)^{2\gd\gga}}\cA^{-2\gd\gga}, \quad m \in \dN,
\eed
or
\bed
\cA^{\gd\gga}\cT(\gt)^m\overline{(\cT(\gt)^*)^m\cA^{\gd\gga}} \le \frac{M^{2\gd}_\gga}{(m \gt)^{2\gd\gga}}, \quad m \in \dN.
\eed
Therefore, one gets
\bed
\Big\|\overline{(\cT(\gt)^*)^m\cA^{\gd\gga}}\Big\| \le \frac{M^{\gd}_\gga}{(m \gt)^{\gd\gga}}, \quad m \in \dN,
\eed
or
\bed
\Big\|\cA^{\gd\gga}\cT(\gt)^m\Big\| \le \frac{M^{\gd}_\gga}{(m \gt)^{\gd\gga}}, \quad m \in \dN.
\eed
Setting $\gd = {\gs}/{\gga}$ we obtain the proof of \eqref{eq:3.21}.
\eproof
\end{proof}
\bl\la{lem:5.3}
Let the assumptions (S1) and (S2) be satisfied and let $\gga \in (\ga,1)$. Then there is a constant
$M_\gga > 0$ such that
\be\la{eq:5.0b}
\|\cA^\gga \cT(\gt)^m\| \le \frac{M_\gga}{(m \gt)^\gga}, \quad m= 1,2,\ldots n,
\ee
holds for any $T > 0 \ $ if $\ \gt \in (0, \tfrac{T}{n})$ and $n \geq n_0$ where $ n_0 := \lfloor(2(\frac{\gL_\gga}{1-\gga}+1) C_\gga)^{\tfrac{1}{1-\gga}} T\rfloor +1$ and $\lfloor x\rfloor$ denotes the largest integer smaller than $x$.
\el
\begin{proof}
Let $M_\gga > 0$ be a constant which satisfies the inequality
\be\la{eq:5.0a}
5\gL_\gga + 2\left(\frac{\gL_\gga}{1-\gga}+1\right)C_\gga M_\gga T^{1-\gga} \frac{1}{n^{1-\gga}}
+ 4\gL_\gga C_\gga M_\gga^{\tfrac{\ga}{\gga}}B(1-\ga,1-\gga) \; T^{1-\ga} \le M_\gga
\ee
for $n \geq n_0$.
Here constants $\gL_\gga$ and $C_\gga$ are
defined by Lemma \ref{lem:3.3} and Lemma \ref{lem:3.6}, respectively, while $B(\cdot,\cdot)$ denotes the  Euler Beta-function. (Note that such $M_\gga > 0$ always exists, see Remark \ref{rem:3.9} below.)

Let $m=1$.  Then by (\ref{EstimateAorK0ExpA}) and (\ref{eq:3.1b}) we get
\bed
\|\cA^\gga \cT(\gt)\| \le \|\cA^\gga e^{-\gt \cK_0}\| \le \frac{1}{\gt^\gga} \le \frac{\gL_\gga}{\gt^\gga} \le \frac{M_\gga}{\gt^\gga},
\eed
for $\gt > 0$ and, in particular, for $\gt \in (0, {T}/{n})$.  Hence \eqref{eq:5.0b} holds for $m=1$.

Let us assume that \eqref{eq:5.0b} holds for $l = 1,2,\ldots,m-1$, with $m \le n$, i.e.
\be\la{eq:3.25}
\|\cA^\gga \cT(\gt)^l\| \le \frac{M_\gga}{(l \gt)^\gga}, \quad l= 1,2,\ldots m-1,
\ee
for $\gt \in (0, {T}/{n})$. We are going to show that \eqref{eq:3.25} holds for $l=m$.  To this aim
we use the representation
\bed
\cU(\gt)^m - \cT(\gt)^m = \sum^{m-1}_{k=0}\cU(\gt)^{m-1-k}(\cU(\gt) - \cT(\gt))\cT(\gt)^k, \quad m = 2,3,\dots\,,
\eed
which implies
\bed
\cT(\gt)^m = \cU(\gt)^m - \sum^{m-1}_{k=0}\cU(\gt)^{m-1-k}(\cU(\gt) - \cT(\gt))\cT(\gt)^k, \quad m = 2,3,\dots\,.
\eed
Hence
\bed
\cA^\gga\cT(\gt)^m = \cA^\gga\cU(\gt)^m - \sum^{m-1}_{k=0}\cA^\gga\cU(\gt)^{m-1-k}(\cU(\gt) - \cT(\gt))\cT(\gt)^k
\eed
or
\bed
\begin{split}
\cA^\gga&\cT(\gt)^m = \cA^\gga\cU(\gt)^m - \cA^\gga\cU(\gt)^{m-1}(\cU(\gt) - \cT(\gt))\\
& - \cA^\gga(\cU(\gt) - \cT(\gt))\cT(\gt)^{m-1}-\sum^{m-2}_{k=1}\cA^\gga\cU(\gt)^{m-1-k}(\cU(\gt) - \cT(\gt))\cT(\gt)^k.
\end{split}
\eed
for $m = 3,4,\dots$. This yields the inequality
\begin{align}\la{eq:5.0}
\|&\cA^\gga\cT(\gt)^m\| \le  \|\cA^\gga\cU(\gt)^m\| + \|\cA^\gga\cU(\gt)^{m-1}(\cU(\gt) - \cT(\gt))\| + \\
& \| \cA^\gga(\cU(\gt) - \cT(\gt))\cT(\gt)^{m-1}\|+ \sum^{m-2}_{k=1}\|\cA^\gga\cU(\gt)^{m-1-k}(\cU(\gt) - \cT(\gt))\cT(\gt)^k\|\nonumber
\end{align}
for $m = 3,4,\dots$. From Lemma \ref{lem:3.3} we get the estimates
\bed
\|\cA^\gga\cU(\gt)^m\| \le \frac{\gL_\gga}{(m\gt)^\gga}, \quad m = 2,3,\dots\,,
\eed
and consequently:
\bed
\|\cA^\gga\cU(\gt)^{m-1}(\cU(\gt) - \cT(\gt))\| \le \frac{2\gL_\gga}{((m-1)\gt)^\gga} \le {{\frac{4\gL_\gga}{(m\gt)^\gga}}}, \quad m = 2,3,\dots\,.
\eed
Then summing up estimates for the first two terms in the right-hand side of (\ref{eq:5.0}) we obtain
\be\la{eq:5.1}
\|\cA^\gga\cU(\gt)^m\| + \|\cA^\gga\cU(\gt)^{m-1}(\cU(\gt) - \cT(\gt))\| \le \frac{5\gL_\gga}{(m\gt)^\gga},
\quad m = 2,3,\dots\,.
\ee

Next we get for the third term in the right-hand side of (\ref{eq:5.0}) the  estimate
\bed
\| \cA^\gga(\cU(\gt) - \cT(\gt))\cT(\gt)^{m-1}\| \le \| \cA^\gga(\cU(\gt) - \cT(\gt))\cA^{-\gga}\| \|\cA^\gga\cT(\gt)^{m-1}\|,
\eed
$m = 2,3,\dots$. Then using Lemma \ref{lem:5.1} we find that
\bed
\| \cA^\gga(\cU(\gt) - \cT(\gt))\cT(\gt)^{m-1}\| \le \left(\frac{\gL_\gga}{1-\gga}+1\right)\
C_\gga \gt^{1-\gga} \ \|\cA^\gga\cT(\gt)^{m-1}\|, \quad m = 2,3,\dots\,.
\eed
By assumption \eqref{eq:3.25} this yields
\bed
\| \cA^\gga(\cU(\gt) - \cT(\gt))\cT(\gt)^{m-1}\| \le \left(\frac{\gL_\gga}{1-\gga}+1\right) \ {M_\gga  C_\gga }\frac{1}{((m-1)\gt)^\gga}\gt^{1-\gga} \ , \quad m = 2,3,\dots\,,
\eed
for $\gt \in (0, {T}/{n})$, which leads to
\be\la{eq:5.2}
\| \cA^\gga(\cU(\gt) - \cT(\gt))\cT(\gt)^{m-1}\| \le \left(\frac{\gL_\gga}{1-\gga}+1\right)
 \ {M_\gga C_\gga} \frac{2}{(m\gt)^\gga}\gt^{1-\gga} \ , \quad m = 2,3,\dots\,.
\ee
%
%

Finally one gets for the sum in (\ref{eq:5.0})
\bed
\begin{split}
\sum^{m-2}_{k=1}\|&\cA^\gga\cU(\gt)^{m-1-k}(\cU(\gt) - \cT(\gt))\cT(\gt)^k\|\\
\le &
\sum^{m-2}_{k=1}\|\cA^\gga\cU(\gt)^{m-1-k}\| \|(\cU(\gt) - \cT(\gt))\cA^{-\ga}\| \|{\cA^\ga}\cT(\gt)^k\|, \quad m = 2,3,\dots\,.
\end{split}
\eed
Then by Lemma \ref{lem:3.3} this implies
\bed
\begin{split}
\sum^{m-2}_{k=1}\|&\cA^\gga\cU(\gt)^{m-1-k}(\cU(\gt) - \cT(\gt))\cT(\gt)^k\|\\
\le &
\ \gL_\gga \ \sum^{m-2}_{k=1} \frac{1}{((m-1-k)\gt)^\gga} \|(\cU(\gt) - \cT(\gt))\cA^{-\ga}\| \|{\cA^\ga}\cT(\gt)^k\|, \quad m = 2,3,\dots\,.
\end{split}
\eed
Taking into account Lemma \ref{lem:3.6} we get
\bed
\begin{split}
\sum^{m-2}_{k=1}\|&\cA^\gga\cU(\gt)^{m-1-k}(\cU(\gt) - \cT(\gt))\cT(\gt)^k\|\\
\le &
\ 2\gL_\gga C_\ga \ \sum^{m-2}_{k=1} \frac{\gt}{((m-1-k)\gt)^\gga}\|{\cA^\ga}\cT(\gt)^k\|,
\quad m = 2,3,\dots\,.
\end{split}
\eed
Finally, using assumption \eqref{eq:3.25} and Lemma \ref{lem:5.2} one obtains
\bed
\begin{split}
\sum^{m-2}_{k=1}\|&\cA^\gga\cU(\gt)^{m-1-k}(\cU(\gt) - \cT(\gt))\cT(\gt)^k\|\\
\le &
\ 2\gL_\gga C_\ga M_\gga^{\tfrac{\ga}{\gga}} \ \sum^{m-2}_{k=1} \frac{\gt}{((m-1-k)\gt)^\gga}\frac{1}{(k\gt)^\ga},
\quad m = 2,3,\dots\,,
\end{split}
\eed
or
\bed
\begin{split}
\sum^{m-2}_{k=1}\|&\cA^\gga\cU(\gt)^{m-1-k}(\cU(\gt) - \cT(\gt))\cT(\gt)^k\|\\
\le &
\ 2\gL_\gga C_\gga M_\gga^{\tfrac{\ga}{\gga}}\left(\sum^{m-2}_{k=1} \frac{1}{(m-1-k)^\gga}\frac{1}{k^\ga}\right)\gt^{1-\gga-\ga}, \quad m = 2,3,\dots\,,
\end{split}
\eed
for $\gt \in (0, {T}/{n})$. Since Lemma \ref{lem:3.9} below yields
\be\la{eq:5.6}
\sum^{m-2}_{k=1} \frac{1}{(m-1-k)^\gga}\frac{1}{k^\ga} \le B(1-\ga,1-\gga) (m-1)^{1-\gga-\ga}, \quad m = 2,3,\dots\,,
\ee
where $B(\cdot,\cdot)$ is the Euler Beta-function, we get
\bed
\begin{split}
\sum^{m-2}_{k=1}\|&\cA^\gga\cU(\gt)^{m-1-k}(\cU(\gt) - \cT(\gt))\cT(\gt)^k\|\\
\le &
2\gL_\gga C_\gga M_\gga^{\tfrac{\ga}{\gga}}B(1-\ga,1-\gga) \gt^{1-\gga-\ga} (m-1)^{1-\gga-\ga}, \quad m = 2,3,\dots\,,
\end{split}
\eed
which in turn leads to
\be\la{eq:5.3}
\begin{split}
\sum^{m-2}_{k=1}\|\cA^\gga&\cU(\gt)^{m-1-k}(\cU(\gt) - \cT(\gt))\cT(\gt)^k\|\\
\le&
\frac{4\gL_\gga C_\gga M_\gga^{\tfrac{\ga}{\gga}}B(1-\ga,1-\gga)}{(m\gt)^\gga} \gt^{1-\ga} m^{1-\ga},
\end{split}
\ee
for $m = 2,3,\dots$ and any $\gt \in (0,{T}/{n})$.

Now we take into account \eqref{eq:5.0}, \eqref{eq:5.1}, \eqref{eq:5.2}  and \eqref{eq:5.3}
to conclude that
\bed
\begin{split}
& \|\cA^\gga\cT(\gt)^m\| \le  \\
& \Big\{5\gL_\gga + 2\left(\frac{\gL_\gga}{1-\gga}+1\right)
{M_\gga C_\gga}\gt^{1-\gga}
+ 4\gL_\gga C_\gga M_\gga^{\tfrac{\ga}{\gga}}B(1-\ga,1-\gga) \gt^{1-\ga} m^{1-\ga}\Big\}\frac{1}{(m\gt)^\gga} \ ,
\end{split}
\eed
for $m = 2,3,\dots$ and $\gt \in (0, {T}/{n})$. Then
\bed
\begin{split}
& \|\cA^\gga\cT (\gt)^m\| \leq \\
& \Big\{5\gL_\gga + 2\left(\frac{\gL_\gga}{1-\gga}+1\right)
{M_\gga C_\gga}T^{1-\gga} \frac{1}{n^{1-\gga}}
+ 4\gL_\gga C_\gga M_\gga^{\tfrac{\ga}{\gga}}B(1-\ga,1-\gga) \; T^{1-\ga}\Big\}\frac{1}{(m\gt)^\gga}.
\end{split}
\eed
%
%
>From assumption \eqref{eq:5.0a} we get
\bed
5\gL_\gga +2\left(\frac{\gL_\gga}{1-\gga}+1\right)
{M_\gga C_\gga}T^{1-\gga} \frac{1}{n^{1-\gga}}+ 4\gL_\gga C_\gga M_\gga^{\tfrac{\ga}{\gga}}B(1-\ga,1-\gga) \; T^{1-\ga} \le M_\gga
\eed
for $n \geq n_0$, which shows that \eqref{eq:3.25} holds for
$l = 1,2,3,\dots,n$ and $n \ge n_0$ which proves \eqref{eq:5.0b}.
\eproof
\end{proof}
\br\la{rem:3.9}
{\em
One checks that condition \eqref{eq:5.0a} is always satisfied for sufficiently large $M =M_\gga $
and $n \geq n_0$.
Indeed, after setting
\bed
c_0 := 5\gL_\gga, \quad c_1 := 2\left(\frac{\gL_\gga}{1-\gga}+1\right) C_\gga T^{1-\gga}, \quad c_2 := 4\gL_\gga C_\gga B(1-\ga,1-\gga) \; T^{1-\ga}
\eed
we get the condition
\bed
c_0 + \frac{c_1}{n^{1-\gga}} M + c_2 M^{\tfrac{\ga}{\gga}} \le M
\eed
which yields
\bed
c_0 + c_2 M^{\tfrac{\ga}{\gga}} \le (1- \frac{c_1}{n^{1-\gga}})M
\eed
or
\bed
\frac{c_0}{M} + \frac{c_2}{M^{1-\tfrac{\ga}{\gga}}} \le 1- \frac{c_1}{n^{1-\gga}}
\eed
Since $n > c^{\tfrac{1}{1-\gga}}_1$ we have $1- {c_1}/{n^{1-\gga}} > 0$. The left-hand side tends to zero
if $M \to \infty$. Hence, choosing $M$ sufficiently large we guarantee
the existence of  $M_\gga$ such that condition \eqref{eq:5.0a} is satisfied for any
$n \geq n_0$.\hfill$\triangle$
}
\er
It remains only to verify the following statement.
\bl\la{lem:3.9}
Let $\ga \in [0,1)$ and $\gga \in [\ga,1)$. Then
\bed
\sum^{n-1}_{k=1} \frac{1}{(n-k)^\gga}\frac{1}{k^\ga} \le B(1-\ga,1-\gga) n^{1-\gga-\ga}, \quad n \in 2,3,\ldots\,.
\eed
the estimate holds where $B(\cdot,\cdot)$ is the Euler Beta-function.
\bed
B(1-\ga,1-\gga) := \int^1_0 \frac{1}{x^\ga (1-x)^\gga}dx
\eed
\el
\begin{proof}
If $x \in (k-1,k]$, then
\bed
\frac{1}{k^\ga} \le \frac{1}{x^\ga}
\quad \mbox{and} \quad
\frac{1}{(n-k)^\gga} \le \frac{1}{(n-1-x)^\gga}
\eed
for $k = 1,2,\ldots,n-1$. Hence
\bed
\frac{1}{(n-k)^\gga k^\ga} \le \frac{1}{(n-1-x)^\gga x^\ga}, \quad x \in (k-1,k].
\eed
Therefore
\bed
\frac{1}{(n-k)^\gga k^\ga} = \int^k_{k-1}\frac{1}{(n-k)^\gga k^\ga}dx \le \int^k_{k-1}\frac{1}{(n-1-x)^\gga x^\ga}dx, \quad x \in (k-1,k],
\eed
or
\bed
\begin{split}
\sum^{n-1}_{k=1}\frac{1}{(n-k)^\gga k^\ga} =& \sum^{n-1}_{k=1}\int^k_{k-1}\frac{1}{(n-k)^\gga k^\ga}dx
\le \sum^{n-1}_{k=1}\int^k_{k-1}\frac{1}{(n-1-x)^\gga x^\ga}dx\\
 =& \int^{n-1}_0 \frac{1}{(n-1-x)^\gga x^\ga}dx = B(1-\ga,1-\gga)n^{1-\ga-\gga}
\end{split}
\eed
\eproof
\end{proof}

\subsection{Main Results}\la{sec:III.2}

In this section we collect our main results and their proofs. They are based on preliminaries
established in Section \ref{sec:III.1}.

\begin{thm}\la{thm:3.11}
Let the assumptions (S1) -(S3) be satisfied and let $ \gb > 2\ga -1$. Then there is a
constant $R_\gb > 0$ such that
\be\la{eq:3.25v}
\sup_{\tau \in \dR_+}\|\cU(\tau) - \cT(\tau/n)^n\| \le \frac{R_\gb}{n^\gb}
\ee
holds for $n \in \dN$ and $\tau \in \dR_+$.
\end{thm}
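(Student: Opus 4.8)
The plan is to run the standard telescoping (Chernoff) argument for $\cU(\tau)=\cU(\gt)^n$ with $\gt:=\tau/n$, and to control the individual summands by the auxiliary estimates of Section~\ref{sec:III.1}. First I would reduce the supremum to $\tau\in[0,T)$: since $e^{-\gt\cK}=0$ for $\gt\ge T$, and since $\cT(\gt)=e^{-\gt\cK_0}e^{-\gt\cB}=e^{-\gt D_0}e^{-\gt\cA}e^{-\gt\cB}$ contains exactly one right shift $e^{-\gt D_0}$ per factor (which commutes through multiplication operators up to a shift of their arguments), a short rearrangement shows $\cT(\gt)^n=(e^{-\gt D_0})^n$ composed with a multiplication operator, hence $\cT(\gt)^n=0$ once $n\gt\ge T$. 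Thus both $\cU(\tau)$ and $\cT(\tau/n)^n$ vanish for $\tau\ge T$, so it suffices to treat $\tau\in[0,T)$, i.e. $\gt\in[0,T/n)$, which is exactly the range in Lemma~\ref{lem:5.3}; moreover, for the finitely many $n<\max\{n_0,2\}$ the trivial bound $\|\cU(\tau)-\cT(\tau/n)^n\|\le 2$ (both operators are contractions) is enough, so one may assume $n\ge\max\{n_0,2\}$.

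The decisive step is the choice of the interpolation exponent $\gga$. The hypothesis $\gb>2\ga-1$ is equivalent to $\ga<\tfrac{1+\gb}{2}$, and since also $\gb<\tfrac{1+\gb}{2}$, the requirements $\gga\in(\ga,1)$, $\gga\ge\gb$ and $1+\gb-2\gga>0$ can be satisfied simultaneously: take $\gga\in(\ga,\tfrac{1+\gb}{2})$ if $\gb\le\ga$, and $\gga\in[\gb,\tfrac{1+\gb}{2})$ if $\gb>\ga$. Fix such a $\gga$; then $\varkappa=\min\{\gga,\gb\}=\gb$ in Lemma~\ref{lem:3.7}. Using the semigroup property and the identity $X^n-Y^n=\sum_{k=0}^{n-1}X^{n-1-k}(X-Y)Y^k$ I would write
\bed
\cU(\tau)-\cT(\gt)^n=\sum_{k=0}^{n-1}\cU(\gt)^{n-1-k}\bigl(\cU(\gt)-\cT(\gt)\bigr)\cT(\gt)^k ,
\eed
and split off the boundary terms $k=0$ and $k=n-1$ from the bulk terms $1\le k\le n-2$.

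For the boundary terms one inserts a factor $\cA^{\gga}\cA^{-\gga}$ beside $\cU(\gt)-\cT(\gt)$ and bounds the factors by Lemma~\ref{lem:3.3} ($\|\overline{\cU(\gt)^{n-1}\cA^{\gga}}\|\le\gL_\gga/((n-1)\gt)^\gga$), Lemma~\ref{lem:3.6} ($\|\cA^{-\gga}(\cU(\gt)-\cT(\gt))\|\le 2C_\gga\gt$ and the mirrored estimate) and Lemma~\ref{lem:5.3} ($\|\cA^{\gga}\cT(\gt)^{n-1}\|\le M_\gga/((n-1)\gt)^\gga$); both terms are then $\le\mathrm{const}\cdot\gt^{1-\gga}(n-1)^{-\gga}$, which for $\gt<T/n$ and $n-1\ge n/2$ is $\le\mathrm{const}\cdot n^{-1}$. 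For a bulk term, inserting $\cA^{-\gga}\cA^{\gga}$ (legitimate since $\cA^{-\gga}$ maps onto $\dom(\cA^{\gga})$ while $\cT(\gt)^k$ maps into $\dom(\cA^{\gga})$ for $k\ge1$), I would write
\bed
\cU(\gt)^{n-1-k}\bigl(\cU(\gt)-\cT(\gt)\bigr)\cT(\gt)^k=\overline{\cU(\gt)^{n-1-k}\cA^{\gga}}\cdot\cA^{-\gga}\bigl(\cU(\gt)-\cT(\gt)\bigr)\cA^{-\gga}\cdot\cA^{\gga}\cT(\gt)^k,
\eed
and estimate the three factors by Lemmas~\ref{lem:3.3}, \ref{lem:3.7} (with $\varkappa=\gb$) and \ref{lem:5.3}:
\bed
\bigl\|\cU(\gt)^{n-1-k}\bigl(\cU(\gt)-\cT(\gt)\bigr)\cT(\gt)^k\bigr\|\le\frac{\gL_\gga}{((n-1-k)\gt)^\gga}\;Z_{\gga,\gb}\,\gt^{1+\gb}\;\frac{M_\gga}{(k\gt)^\gga}=\gL_\gga M_\gga Z_{\gga,\gb}\,\frac{\gt^{1+\gb-2\gga}}{(n-1-k)^\gga\,k^\gga}.
\eed
Summing over $1\le k\le n-2$ and applying Lemma~\ref{lem:3.9} with both exponents equal to $\gga$ and with $n$ replaced by $n-1$ gives $\sum_{k=1}^{n-2}(n-1-k)^{-\gga}k^{-\gga}\le B(1-\gga,1-\gga)(n-1)^{1-2\gga}$, so the bulk sum is at most $\gL_\gga M_\gga Z_{\gga,\gb}B(1-\gga,1-\gga)\,\gt^{1+\gb-2\gga}(n-1)^{1-2\gga}$. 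Since $1+\gb-2\gga>0$, one uses $\gt<T/n$ to replace $\gt^{1+\gb-2\gga}$ by $(T/n)^{1+\gb-2\gga}$, together with $(n-1)^{1-2\gga}\le\mathrm{const}\cdot n^{1-2\gga}$ for $n\ge2$; the power of $n$ then works out to $-(1+\gb-2\gga)+(1-2\gga)=-\gb$, so the bulk sum is $\le\mathrm{const}\cdot n^{-\gb}$. Adding the three contributions and using $n^{-1}\le n^{-\gb}$ (because $\gb<1$) yields $\|\cU(\tau)-\cT(\tau/n)^n\|\le\mathrm{const}\cdot n^{-\gb}$ for $n\ge\max\{n_0,2\}$ and $\tau\in[0,T)$; absorbing the finitely many remaining $n$ into the constant gives the claimed $R_\gb$.

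The analytic substance is entirely contained in the preliminary lemmas — notably the bootstrap Lemma~\ref{lem:5.3} — so the theorem itself is essentially bookkeeping. The two points that genuinely matter are: (i) the choice of $\gga$, where the condition $\gb>2\ga-1$ (as opposed to the stronger $\gb>\ga$ used for Yagi's solution operator) is precisely what keeps the admissible interval $\bigl(\ga,\tfrac{1+\gb}{2}\bigr)$ for $\gga$ nonempty; and (ii) the balancing in the bulk sum, where the positive power $\gt^{1+\gb-2\gga}$ — converted into $n^{-(1+\gb-2\gga)}$ via $\gt\le T/n$ — combines with the Beta-function factor $(n-1)^{1-2\gga}$ to produce exactly the exponent $\gb$, regardless of the sign of $1-2\gga$.
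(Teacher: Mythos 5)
Your proposal is correct and follows essentially the same route as the paper: the identical telescoping decomposition into the two boundary terms and the bulk sum, controlled respectively by Lemmas~\ref{lem:3.3}, \ref{lem:3.6}, \ref{lem:5.3} and by Lemmas~\ref{lem:3.3}, \ref{lem:3.7}, \ref{lem:5.3} together with the Beta-function bound of Lemma~\ref{lem:3.9}, with the same choice of $\gga$ exploiting that $\gb>2\ga-1$ keeps $(\ga,\tfrac{1+\gb}{2})$ nonempty so that $\varkappa=\gb$ and the exponents combine to $n^{-\gb}$. Your handling of $(n-1)^{1-2\gga}$ versus $n^{1-2\gga}$ and of the finitely many small $n$ is, if anything, slightly more careful than the paper's.
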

\begin{proof}
%
%
Taking into account the representation
\bed
\cU(\tau/n)^n - \cT(\tau/n)^n = \sum^{n-1}_{m=0}\cU(\tau/n)^{n-m-1}(\cU(\tau/n) - \cT(\tau/n))\cT(\tau/n)^m, \quad n \in \dN,
\eed
or, identically,
\bed
\begin{split}
\cU(\tau/n)^n& - \cT(\tau/n)^n\\
= & \cU(\tau/n)^{n-1}(\cU(\tau/n) - \cT(\tau/n)) +
(\cU(\tau/n) - \cT(\tau/n))\cT(\tau/n)^{n-1}+\\
&\sum^{n-2}_{m=1}\cU(\tau/n)^{n-m-1}(\cU(\tau/n) - \cT(\tau/n))\cT(\tau/n)^m, \quad n=3,4,\ldots \ ,
\end{split}
\eed
we obtain the estimate
\begin{align}
\|\cU&(\tau/n)^n - \cT(\tau/n)^n\|\nonumber \\
\le & \;\|\cU(\tau/n)^{n-1}\cA^\gga\| \|\cA^{-\gga}(\cU(\tau/n) - \cT(\tau/n))\| \nonumber\\
& +\|(\cU(\tau/n) - \cT(\tau/n))\cA^{-\gga}\| \|\cA^{\gga}\cT(\tau/n)^{n-1}\|\la{eq:3.31}\\
&+\sum^{n-2}_{m=1}\|\cU(\tau/n)^{n-m-1}\cA^\gga\| \|\cA^{-\gga}(\cU(\tau/n) - \cT(\tau/n))\cA^{-\gga}\| \|\cA^\gga\cT(\tau/n)^m\|,\nonumber
\end{align}
$n =3,4,\ldots\,$.

Note that using Lemma \ref{lem:3.3} and Lemma \ref{lem:3.6} one gets
\bed
\|\cU(\tau/n)^{n-1}\cA^\gga\| \|\cA^{-\gga}(\cU(\tau/n) - \cT(\tau/n))\| \le 2 \ \frac{\gL_\gga C_\gga}
{({\tau (n-1)}/{n})^\gga}\ \frac{\tau}{n} \ ,
\eed
which yields
\be\la{eq:3.32}
\|\cU(\tau/n)^{n-1}\cA^\gga\| \|\cA^{-\gga}(\cU(\tau/n) - \cT(\tau/n))\| \le 2^{1+\gga}\gL_\gga C_\gga T^{1-\gga}\frac{1}{n}.
\ee
for $n = 3,4,\ldots$ and $\tau \in [0,T]$.

Now using Lemma \ref{lem:3.6} and Lemma \ref{lem:5.3} for $m=n-1$ we find
\bed
\|(\cU(\tau/n) - \cT(\tau/n))\cA^{-\gga}\| \|\cA^{\gga}\cT(\tau/n)^{n-1}\| \le 2 \ C_\gga \
\frac{\tau}{n} \ \frac{M_\gga}{(\tau(n-1)/{n})^\gga} \ ,
\eed
for $n \geq n_0$, where $n_0$ is defined in Lemma \ref{lem:5.3} and $\tau \in [0,T]$. Hence,
\be\la{eq:3.33}
\|(\cU(\tau/n) - \cT(\tau/n))\cA^{-\gga}\| \|\cA^{\gga}\cT(\tau/n)^{n-1}\| \le 2^{1+\gga} \ C_\gga M_\gga \ T^{1-\gga}
\frac{1}{n} \ .
\ee
%
%

Taking into account Lemma \ref{lem:3.3}, Lemma \ref{lem:3.7} and Lemma \ref{lem:5.3}
(for $\varkappa = \min \{\gamma, \beta\}$) one gets
\bed
\begin{split}
\sum^{n-2}_{m=1}\|\cU(\tau/n)^{n-m-1}\cA^\gga\| &\|\cA^{-\gga}(\cU(\tau/n) - \cT(\tau/n))\cA^{-\gga}\| \|\cA^\gga\cT(\tau/n)^m\|\\
\le & \sum^{n-2}_{m=1}\frac{\gL_\gga \ Z_{\gga,\gb}}{({(n-m-1)\ \tau}/{n})^\gga} \  \Big(\frac{\tau}{n}\Big)^{1+\varkappa}\frac{M_\gga}{(m \ {\tau}/{n})^\gga }\\
= &\frac{\gL_\gga Z_{\gga,\gb} M_\gga \tau^{1+\varkappa-2\gga}}{n^{1+\varkappa-2\gga}} \ \sum^{n-2}_{m=1}\frac{1}{(n-m-1)^\gga}\frac{1}{m^\gga},
\end{split}
\eed
for $n > \max\left\{2, n_0\right\}$ and $\tau \in [0,T]$ .
Then by \eqref{eq:5.6} we obtain
\bed
\begin{split}
\sum^{n-2}_{m=1}&\|\cU(\tau/n)^{n-m-1}\cA^\gga\| \|\cA^{-\gga}(\cU(\tau/n) - \cT(\tau/n))\cA^{-\gga}\| \|\cA^\gga\cT(\tau/n)^m\|\\
\le &\frac{\gL_\gga Z_{\gga,\gb} M_\gga \tau^{1+\varkappa-2\gga}}{n^{1+\varkappa-2\gga}}\
B(1-\gga,1-\gga) \,n^{1-2\gga} \ ,
\end{split}
\eed
or
\be\la{eq:3.34}
\begin{split}
\sum^{n-2}_{m=1}&\|\cU(\tau/n)^{n-m-1}\cA^\gga\| \|\cA^{-\gga}(\cU(\tau/n) - \cT(\tau/n))\cA^{-\gga}\| \|\cA^\gga\cT(\tau/n)^m\|\\
\le & \ \gL_\gga Z_{\gga,\gb} M_\gga B(1-\gga,1-\gga)\ T^{1+\varkappa-2\gga} \ \frac{1}{n^\varkappa} \ .
\end{split}
\ee
%
%

Therefore, by virtue of  \eqref{eq:3.31}, \eqref{eq:3.32}, \eqref{eq:3.33} and \eqref{eq:3.34} we get for $n > \max\left\{2,n_0\right\}$ and $\tau \in [0,T]$ the estimate
\bed
\begin{split}
\|\cU&(\tau)^n - \cT(\tau/n)^n\| = \|\cU(\tau/n)^n - \cT(\tau/n)^n\|\\
\le & 2^{1+\gga}\gL_\gga C_\gga T^{1-\gga}\frac{1}{n} + 2^{1+\gga}C_\gga M_\gga T^{1-\gga} \frac{1}{n} +\gL_\gga Z_{\gga,\gb} M_\gga B(1-\gga,1-\gga)T^{1+\varkappa-2\gga}\frac{1}{n^\varkappa}\\
\le &\left\{2^{1+\gga}\gL_\gga C_\gga T^{1-\gga} + 2^{1+\gga}C_\gga M_\gga T^{1-\gga} +\gL_\gga Z_{\gga,\gb} M_\gga B(1-\gga,1-\gga) T^{1+\varkappa-2\gga}\right\}\frac{1}{n^\varkappa} \ .
\end{split}
\eed
If $ \ga  < \gb < 1$, then we choose $\gga = \gb$, i.e., $\varkappa = \gb$ and $1+\varkappa-2\gga =
1-\gb \ge 0$. Setting
\bed
R'_\gb := 2^{1+\gb}\gL_\gb C_\gb T^{1-\gb} + 2^{1+\gb}C_\gb M_\gb T^{1-\gb} +\gL_\gb Z_{\gb,\gb} M_\gb B(1-\gb,1-\gb) T^{1-\gb}
\eed
one obtains the estimate
\be\la{eq:3.37v}
\|\cU(\tau)^n - \cT(\tau/n)^n\| \le \frac{R'_\gb}{n^\gb} \ ,
\ee
for $n > \max\left\{2,n_0\right\}$ and $\tau \in [0,T]$ .

Now let $0 < \gb \le \ga$. Since $1+ \gb -2\ga > 0$, there exists $\gga \in (\ga,1)$ such that
$1 +\gb-2\gga \ge 0$. Indeed, there is a $\varepsilon > 0$ verifying
$1+\gb- 2\ga > 2\varepsilon$. Setting $\gga = \ga + \varepsilon$ we get $1+\gb-2\gga > 0$. Notice that $\varkappa = \gb$. Then setting
\bed
R'_\gb := 2^{1+\gga}\gL_\gga C_\gga T^{1-\gga} + 2^{1+\gga}C_\gga M_\gga T^{1-\gga} +\gL_\gga Z_{\gga,\gb} M_\gga B(1-\gga,1-\gga) T^{1+\gb-2\gga} \ ,
\eed
we obtain \eqref{eq:3.37v} for $n > \max\left\{2,n_0\right\}$.

Both results immediately imply that
there is a constant $R_\gga$ such that \eqref{eq:3.25v} holds for $\tau \in [0,T]$ and $n \in \dN$. Finally, using $\cU(\tau) = 0$ and $\cT({\tau}/{n})^n = 0$ for $\tau \ge T$ we obtain
\eqref{eq:3.25} for any $\tau \in \dR_+$.
\eproof
\end{proof}

Now we set
\bed
\wt\cT(\tau) := e^{-\tau\cB}e^{-\tau\cK_0}, \quad \tau \in \dR_+.
\eed
\bc\la{cor:3.12}
Let the assumptions (S1) -(S3) be satisfied and $\gb > 2\ga -1$. Then there exists $\wt R_\gb > 0$
such that estimate
\be\la{eq:3.31a}
\sup_{\tau \in \dR_+}\|\cU(\tau) - \wt\cT(\tau/n)^n\| \le \frac{\wt R_\gb}{n^\gb}
\ee
holds for $n \in \dN$ and $\tau \in \dR_+$.
\ec
\begin{proof}
Notice that
\bed
\wt\cT({\tau}/{n})^{n+1} = e^{-{\tau}\cB /{n}} \, \cT({\tau}/{n})^n \, e^{-{\tau}\cK_0/{n}},
\quad \tau \in \dR_+, \quad n \in \dN.
\eed
Hence
\bed
\begin{split}
\cU((n+1)& \tau/n) - \wt\cT(\tau/n)^{n+1} = e^{-(n+1)\tau\cK/n}- e^{-\tau\cB/n}\cT(\tau/n)^ne^{-\tau\cK_0/n}\\
=& e^{-(n+1)\tau\cK/n} - e^{-\tau\cB/n}e^{-\tau\cK} e^{-\tau\cK_0/n} + e^{-\tau\cB/n}(\cU(\tau)-\cT(\tau/n)^n) e^{-\tau\cK_0/n}\\
=& (I - e^{-\tau\cB/n})e^{-\tau\cK} e^{-\tau\cK_0/n} + e^{-\tau \cK}(e^{-\tau\cK/n} - e^{-\tau\cK_0/n}) +\\
& e^{-\tau\cB/n}(\cU(\tau)-\cT(\tau/n)^n) e^{-\tau\cK_0/n}, \quad \tau \in \dR_+, \quad n \in \dN,
\end{split}
\eed
which yields the estimate
\be\la{eq:3.34a}
\begin{split}
\|\cU((n+1)&\tfrac{\tau}{n}) - \wt\cT(\tfrac{\tau}{n})^{n+1}\| \\
\le & \|(I - e^{-\tfrac{\tau}{n}\cB})e^{-\tau\cK}\|  + \|e^{-\tau \cK}(e^{-\tfrac{\tau}{n}\cK} - e^{-\tfrac{\tau}{n}\cK_0})\| +\\
& \|\cU(\tau)-\cT(\tfrac{\tau}{n})^n\|, \quad \tau \in \dR_+, \quad n \in \dN.
\end{split}
\ee

Obviously, one has
\bed
\|(I - e^{-\tfrac{\tau}{n}\cB})e^{-\tau\cK}\|\le \|(I - e^{-\tfrac{\tau}{n}\cB})\cA^{-\ga}\|\|\cA^\ga e^{-\tau\cK}\|,\quad \tau \in \dR_+, \quad n \in \dN.
\eed
Using
\bed
(I - e^{-\tfrac{\tau}{n}\cB})\cA^{-\ga} = \int^{\tfrac{\tau}{n}}_0 e^{-\gs \cB}\cB\cA^{-\ga}d\gs, \quad \tau \in \dR_+, \quad n \in \dN,
\eed
we get the estimate
\bed
\|(I - e^{-\tfrac{\tau}{n}\cB})\cA^{-\ga}\| \le C_\ga \frac{\tau}{n}, \quad \tau \in \dR_+, \quad n \in \dN.
\eed
Taking into account condition (S2) and Lemma \ref{lem:3.3} we find
\be\la{eq:3.38}
\|(I - e^{-\tfrac{\tau}{n}\cB})e^{-\tau\cK}\|\le C_\ga \gL_\ga \frac{\tau^{1-\ga}}{n} \le C_\ga \gL_\ga T^{1-\ga}\frac{1}{n}, \quad \tau \in \dR_+, \quad n \in \dN,
\ee
where we have used that $e^{-\tau\cK} = 0$ for $\tau \ge T$.

Further, we have
\bed
\|e^{-\tau \cK}(e^{-\tfrac{\tau}{n}\cK} - e^{-\tfrac{\tau}{n}\cK_0})\| \le \|e^{-\tau \cK}\cA^\ga\|\,\|\cA^{-\ga}(e^{-\tfrac{\tau}{n}\cK} - e^{-\tfrac{\tau}{n}\cK_0})\|,
\eed
$\tau \in \dR_+$, $n \in \dN$. Then using
\bed
\cA^{-\ga}(e^{-\tfrac{\tau}{n}\cK} - e^{-\tfrac{\tau}{n}\cK_0}) = -\int^{\tfrac{\tau}{n}}_0 e^{-\gs \cK_0}\overline{\cA^{-\ga}\cB}e^{-(\tau-\gs)\cK}d\gs,
\eed
$\tau \in \dR_+$, $n \in \dN$, we find the estimate
\bed
\|\cA^{-\ga}(e^{-\tfrac{\tau}{n}\cK} - e^{-\tfrac{\tau}{n}\cK_0})\| \le  C_\ga \ \frac{\tau}{n}, \quad \tau \in \dR_+,
\quad n \in \dN.
\eed
Applying again Lemma \ref{lem:3.3} one gets
\be\la{eq:3.42}
\|e^{-\tau \cK}(e^{-\tfrac{\tau}{n}\cK} - e^{-\tfrac{\tau}{n}\cK_0})\|  \le C_\ga \gL_\ga T^{1-\ga} \
\frac{1}{n}, \quad \tau \in \dR_+, \quad n \in \dN.
\ee
The insertion of \eqref{eq:3.38} and \eqref{eq:3.42} into \eqref{eq:3.34a} yields
\bed
\|\cU((n+1)\tfrac{\tau}{n}) - \wt\cT(\tfrac{\tau}{n})^{n+1}\|  \le 2C_\ga\gL_\ga \ \frac{1}{n} + \|\cU(\tau)-\cT(\tfrac{\tau}{n})^n)\|, \quad \tau \in \dR_+, \quad n \in \dN.
\eed
Then by  Theorem \ref{thm:3.11} we obtain
\bed
\|\cU((n+1)\tfrac{\tau}{n}) - \wt\cT(\tfrac{\tau}{n})^{n+1}\|  \le 2C_\ga\gL_\ga \ \frac{1}{n} + R_\gga \
\frac{1}{n^\gga}, \quad \tau \in \dR_+, \quad n \in \dN.
\eed
Therefore, by setting $R'_\gga := 2C_\ga\gL_\ga + R_\gga$ we obtain
\bed
\|\cU((n+1)\tfrac{\tau}{n}) - \wt\cT(\tfrac{\tau}{n})^{n+1}\|  \le \frac{R'_\gga}{n^\gga}, \quad \tau \in \dR_+, \quad n \in \dN.
\eed
which yields
\bed
\sup_{\tau \in\dR_+}\|\cU((n+1)\tfrac{\tau}{n}) - \wt\cT(\tfrac{\tau}{n})^{n+1}\|  \le \frac{R'_\gga}{n^\gga}, \quad \tau \in \dR_+, \quad n \in \dN.
\eed
Let $\tau = \tau'{n}/{(n+1)}$ for $\tau' \in \dR_+$. Then
\bed
\sup_{\tau \in\dR_+}\|\cU((n+1)\tfrac{\tau}{n}) - \wt\cT(\tfrac{\tau}{n})^{n+1}\| = \sup_{\tau' \in\dR_+}\|\cU(\tau') - \wt\cT(\tfrac{\tau'}{n+1})^{n+1}\| \le \frac{R'_\gga}{n^\gga} \ ,
\eed
or
\bed
\sup_{\tau' \in\dR_+}\|\cU(\tau') - \wt\cT(\tfrac{\tau'}{n+1})^{n+1}\| \le 2^\gga \ \frac{R'_\gga}{(n+1)^\gga},
\eed
$\tau \in \dR_+$, $n \in \dN$. Setting $\wt R_\gga := \max\{2,2^\gga R'_\gga\}$ we prove \eqref{eq:3.31a}.

\eproof
\end{proof}
These results can be immediately extended to propagators. To this end we set
\be\la{eq:3.49}
\begin{split}
\wt G_j(t,s;n) :=& e^{-\tfrac{t-s}{n}B(t_j)}e^{-\tfrac{t-s}{n}A}, \quad j = 0,1,2,\ldots,n,\\
\wt V_n(t,s) :=& \wt G_n(t,s;n) \wt G_{n-1}(t,s;n) \times \cdots \times \wt G_2(t,s;n) \wt G_1(t,s;n),
\end{split}
\ee
$t_j := s + j\tfrac{t-s}{n}$, $j = 0,1,2,\ldots,n$, in analogy to \eqref{eq:1.6}.
\bt\la{thm:3.12}
Let the assumptions (S1)-(S3) be satisfied. Further, let $\{U(t,s)\}_{(t,s)\in\gD_0}$ be the propagator corresponding to the evolution generator $\cK$
and let $\{V_n(t,s)\}_{(t,s) \in \gD_0}$ and $\{\wt V_n(t,s)\}_{(t,s) \in \gD_0}$ be defined by \eqref{eq:1.6} and \eqref{eq:3.49}, respectively.
If $\gb > 2\ga -1$, then the estimates
\be\la{eq:3.30}
\esssup_{(t,s)\in\gD_0}\| U(t,s) - V_n(t,s)\| \le \frac{R_\gb}{n^\gb}
\quad \mbox{and} \quad
\esssup_{(t,s)\in\gD_0}\| U(t,s) - \wt V_n(t,s)\| \le \frac{\wt R_\gb}{n^\gb}
\ee
hold for $n \in \dN$, where the constants $R_\gga$ and $\wt R_\gga$ are those of Theorem \ref{thm:3.11} and Corollary \ref{cor:3.12}.
\et
\begin{proof}
Note that Proposition 2.1 of \cite{NeiStephZagr2018b} yields
\bed
\sup_{\tau \in \dR_+}\|\cU(\tau) - \cT(\tfrac{\tau}{n})^n\| = \esssup_{(t,s)\in\gD_0}\|U(t,s) - V_n(t,s)\|, \quad n \in \dN.
\eed
Then applying Theorem \ref{thm:3.11} we prove \eqref{eq:3.30}.

To proof the second estimate we use Proposition 3.8 of \cite{NeiStephZagr2018} where the relation
\bed
\sup_{\tau \in \dR_+}\|\cU(\tau) - \wt\cT(\tfrac{\tau}{n})^n\| = \esssup_{(t,s)\in\gD_0}\|U(t,s) - \wt V_n(t,s)\|, \quad n \in \dN.
\eed
was shown. Applying Corollary \ref{cor:3.12} we complete the proof.
\eproof
\end{proof}

\section{Example}

As an example we consider the diffusion equation perturbed by a time-dependent scalar
potential. For this aim let $\gotH=L^2(\Omega)$, where $\Omega\subset \mathbb{R}^3$
is a bounded domain with sufficiently smooth boundary. Domains in higher dimension can be treated analogously. The equation 
reads as
\begin{align}
 \dot u(t)=\Delta u(t) -B(t)u(t), \quad u(s)=u_s\in \gotH, \quad t,s\in [0,T] \ ,
 \label{EvolProbLaplaceAndTimeDependentPotential}
\end{align}
where $\Delta$ denotes the Laplace operator in $L^2(\Omega)$ with Dirichlet boundary conditions, i.e. 
$\Delta:\dom(\Delta)=H^2(\Omega)\cap H^1_0(\Omega)\rightarrow L^2(\Omega)$ and $H^1_0(\Omega)$ denotes the subset of functions that vanish at the boundary.
Then operator $-\Delta$ is self-adjoint on $\gotH$ and positive. For any $\alpha \in (0,1)$ the fractional power 
of operator $-\Delta$ is defined on the domain $\dom((-\Delta)^{\alpha})$, i.e. 
$(-\Delta)^{\alpha}: \dom((-\Delta)^{\alpha})\rightarrow L^2(\Omega)$. The domain is given by a fractional Sobolev space and for $\alpha>1/2$, we have
$\dom((-\Delta)^{\alpha}) = H_0^{2\alpha}(\Omega) \subset H^{2\alpha}(\Omega)$ (see \cite{LionsMagenes1972} for more information).

Moreover let $B(t)$ denote a time-dependent scalar-valued multiplication operator given by
\be\la{eq:4.2}
\begin{split}
 (B(t)f)(x) =&V(t,x)f(x),\\
 \dom(B(t))=& \{f\in L^2(\cI, \gotH): V(\cdot,x)f(x)\in L^2(\cI, \gotH)\} \ 
\end{split}
\ee
where $V:\cI\times\Omega\rightarrow \mathbb R$ is measurable.
We assume that the potential $V(\cdot,\cdot)$ is real and non-negative. 
Then $B(t)$ is obviously self-adjoint and non-negative on $\gotH$.
\begin{thm}\label{TheoremExample}
Let $A$ be the Laplacian operator $-\gD$ with Dirichlet boundary conditions in $L^2(\Omega)$, see above. 
Further, let $\{B(t)\}_{i\in\cI}$ be the family of multiplication operators defined by \eqref{eq:4.2}. If $V(\cdot,\cdot): \cI \times \Omega \longrightarrow \dR$ is measurable, real, non-negative with regularity $V\in L^\infty(\cI,L^{2+\varepsilon}(\Omega))\cap C^\beta(\cI,L^{1+\varepsilon}(\Omega))$ for $\gb \in (0,1)$ and some $\varepsilon>0$, then 
the assumptions (S1)-(S3) are satisfied with $\alpha\in[3/4, 1)$.  Moreover, if
$\gb > 2\ga -1$  then the converging rates of  Theorem \ref{thm:3.11}, Corollary \ref{cor:3.12} and Theorem \ref{thm:3.12} hold.
\end{thm}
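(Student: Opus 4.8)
\noindent\emph{Proof plan.}
The plan is to check that the pair $A=-\gD$, $\{B(t)\}_{t\in\cI}$ satisfies the structural assumptions (S1)--(S3) for $\alpha=3/4$, and then to read off the three convergence estimates directly from Theorem \ref{thm:3.11}, Corollary \ref{cor:3.12} and Theorem \ref{thm:3.12}. For (S1) I would first note that, $\Omega$ being bounded, the lowest Dirichlet eigenvalue is positive, so $-\gD\ge\lambda_1(\Omega)\,I>0$; after the harmless time-rescaling $t\mapsto\lambda_1(\Omega)t$ we may assume $A:=-\gD\ge I$ while keeping $B(t)$ non-negative and the H\"older exponent $\beta$ unchanged. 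Self-adjointness and non-negativity of $A$ and of the multiplication operators $B(t)$ (recall that $V$ is real and non-negative) are clear, and strong measurability of $t\mapsto(I+B(t))^{-1}$, which is multiplication by $1/(1+V(t,\cdot))$, follows from the joint measurability of $V$ on $\cI\times\Omega$ together with separability of $\gotH$ (Pettis) and a dominated-convergence argument; this gives (S1).

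The estimates (S2)--(S3) rest on two standard facts: the norm equivalence $\|u\|_{H^{2\alpha}(\Omega)}\asymp\|A^{\alpha}u\|_{L^2(\Omega)}$ on $\dom(A^{\alpha})=H^{2\alpha}_0(\Omega)$ for $\alpha>1/2$ and $\Omega$ smooth, so that $A^{-\alpha}$ is bounded from $L^2(\Omega)$ into $H^{2\alpha}_0(\Omega)$ (cf. \cite{LionsMagenes1972}), and the Sobolev embedding $H^{2\alpha}(\Omega)\hookrightarrow L^{q}(\Omega)$ in dimension $3$, which is valid for every finite $q$ as soon as $2\alpha\ge 3/2$, i.e. $\alpha\ge 3/4$ (and with $q=\infty$ when $\alpha>3/4$). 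For (S2) I would write $B(t)A^{-\alpha}f=V(t,\cdot)\,(A^{-\alpha}f)$ and apply H\"older with exponents $2+\varepsilon$ and $r:=2(2+\varepsilon)/\varepsilon$; since $A^{-\alpha}$ maps $L^2$ boundedly into $H^{2\alpha}_0\hookrightarrow L^{r}$ and $V\in L^\infty(\cI,L^{2+\varepsilon}(\Omega))$, this gives $\|B(t)A^{-\alpha}f\|_{L^2}\le C\,\|V(t,\cdot)\|_{L^{2+\varepsilon}}\|f\|_{L^2}$, hence $C_\alpha=\esssup_{t}\|B(t)A^{-\alpha}\|\le C\,\|V\|_{L^\infty(\cI,L^{2+\varepsilon})}<\infty$; strong measurability of $B(\cdot)A^{-\alpha}$ is obtained as in (S1).

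For (S3) the key point is to exploit the self-adjointness of $A^{-\alpha}$: testing on $f,h\in L^2(\Omega)$,
\bed
\big\langle A^{-\alpha}(B(t)-B(s))A^{-\alpha}f,\,h\big\rangle=\int_\Omega\big(V(t,x)-V(s,x)\big)\,(A^{-\alpha}f)(x)\,\overline{(A^{-\alpha}h)(x)}\,dx,
\eed
so that H\"older with exponents $1+\varepsilon$ and $(1+\varepsilon)'$, followed by the elementary inequality $\|g_1g_2\|_{L^{(1+\varepsilon)'}}\le\|g_1\|_{L^{q}}\|g_2\|_{L^{q}}$ with $q:=2(1+\varepsilon)'=2(1+\varepsilon)/\varepsilon$, together with $H^{2\alpha}_0\hookrightarrow L^{q}$ (again guaranteed for $\alpha\ge 3/4$ because $q<\infty$) and $V\in C^\beta(\cI,L^{1+\varepsilon}(\Omega))$, yields
\bed
\big\|A^{-\alpha}(B(t)-B(s))A^{-\alpha}\big\|\le C\,\|V(t,\cdot)-V(s,\cdot)\|_{L^{1+\varepsilon}}\le L_{\alpha,\beta}\,|t-s|^\beta,
\eed
which is (S3), continuity of the map then supplying the measurability used implicitly. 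Thus (S1)--(S3) hold with $\alpha=3/4$, and by Remark \ref{rem:2.2} for every $\alpha\in[3/4,1)$. Consequently, under the additional hypothesis $\beta>2\alpha-1$ (with the optimal choice $\alpha=3/4$ this amounts to $\beta>1/2$) the assumptions of Theorem \ref{thm:3.11}, Corollary \ref{cor:3.12} and Theorem \ref{thm:3.12} are met, and the $O(n^{-\beta})$ rates for $\cT(\tau/n)^n$, $\wt\cT(\tau/n)^n$ and for the products $V_n(t,s)$, $\wt V_n(t,s)$ of \eqref{eq:1.6} and \eqref{eq:3.49} follow at once.

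The main obstacle I expect is the exponent bookkeeping in dimension $3$ at the endpoint $\alpha=3/4$: there $2\alpha$ equals the critical Sobolev value $d/2=3/2$, so one must be careful that the argument never needs the (false) embedding $H^{3/2}(\Omega)\hookrightarrow L^\infty(\Omega)$ but only embeddings into $L^q$ with $q<\infty$ --- which is indeed the case, since every application above uses a finite exponent ($r=2(2+\varepsilon)/\varepsilon$ in (S2), $q=2(1+\varepsilon)/\varepsilon$ in (S3), both finite for $\varepsilon>0$). The second delicate point is the identification $\dom((-\gD)^\alpha)=H^{2\alpha}_0(\Omega)$ with equivalent norms and the boundedness $A^{-\alpha}:L^2(\Omega)\to H^{2\alpha}_0(\Omega)$, for which I would rely on the regularity theory of the Dirichlet Laplacian on smooth domains (e.g. \cite{LionsMagenes1972}); everything else is routine H\"older/Sobolev estimation.
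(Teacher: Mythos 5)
Your proposal is correct and follows essentially the same route as the paper: identify $\dom((-\gD)^{\alpha})$ with $H_0^{2\alpha}(\Omega)$, use the Sobolev embedding $H^{2\alpha}(\Omega)\hookrightarrow L^{q}(\Omega)$ for all finite $q$ at the endpoint $\alpha=3/4$ in dimension $3$, and pair it with H\"older's inequality against $V\in L^{\infty}(\cI,L^{2+\varepsilon})$ for (S2) and the bilinear form against $V\in C^{\beta}(\cI,L^{1+\varepsilon})$ for (S3). In fact your write-up is more complete than the paper's, which only sketches boundedness of $F(t)$ and leaves the $|t-s|^{\beta}$ estimate and the $A\ge I$ normalization implicit; your explicit exponent bookkeeping and rescaling argument fill those gaps correctly.
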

\begin{proof}
Since $\Omega$ is bounded there one has $\inf\gs(A) > 0$ which does not satisfy $A \ge I$ in general and, hence, assumption (S1) is not satisfied. 
Nevertheless $\inf\gs(A) > 0$ is sufficient to prove the converging results. So we can believe that (S1) is satisfied. 

Let $\alpha \geq 3/4$. Using the Sobolev space embeddings, we get that 
$H^{2\alpha}(\Omega)\subset L^\gamma(\Omega)$ for any $\gamma\in[2,\infty[$. Hence, if $V\in L^\infty(\cI,L^{2+\varepsilon}(\Omega))$, 
we conclude that the function $[0,T] \ni t \mapsto B(t)(-\Delta)^{-\alpha} $ is essentially operator-norm bounded in $t \in \cI$ and thus, (S2) is satisfied. Now, we consider 
 \begin{align*}
 F(t):=(-\Delta)^{-\alpha}B(t)(-\Delta)^{-\alpha} : L^2(\Omega)\rightarrow
 H^{2\alpha}(\Omega)\subset L^2(\Omega).
\end{align*}
 The function $F(\cdot):\cI \rightarrow \cal L(\gotH)$ is bounded for fixed $t\in [0,T]$ if for any 
$f,g \in H^{2\alpha}(\Omega)$ the function $\langle f, B(t) g\rangle$ is bounded. This holds since 
$V(t, \cdot)\in L^{1+\varepsilon}(\Omega)$ and $H^{2\alpha}(\Omega)\subset L^\gamma(\Omega)$ for any 
$\gamma\in[2,\infty[$. Hence we conclude that (S3) is satisfied and the claim is proved.
\eproof
\end{proof}
Theorem \ref{TheoremExample} provides a convergence rate of an approximation of 
the solution of (\ref{EvolProbLaplaceAndTimeDependentPotential}) by the time-ordered product
 \begin{align}
  \wt V_n(t,s) = \prod_{j=1}^n e^{-\frac{t-s}{n}V(\frac{jt + (n-j)s}{n},\cdot)}e^{\frac{t-s}{n}\Delta}
 \end{align}
 This looks elaborate, but is indeed simple. There are strategies to compute the semigroup of the Laplace operator for bounded domains and there are also explicit formulas on special domains like disks etc. The factors $e^{-\tau V(t_j)}$, $j = 1,2, \ldots, n$ are scalar valued and can be easily computed.

\section*{Acknowledgment}

We thank Takashi Ichinose and Hideo Tamura for the explanation of details of the proof of Theorem 1.1 of \cite{IchinoseTamura1998}, 
which makes possible to prove Lemma~\ref{lem:5.2} and Lemma~\ref{lem:5.3}.

\vspace{10mm}

\def\cprime{$'$}

\end{document}